\newfont{\cmtt}{cmtt10}
\newtheorem{theorem}{Theorem}[section] \newtheorem{%
definition}[theorem]{Definition} \newtheorem{lemma}[theorem]{Lemma}
\newtheorem{corollary}[theorem]{Corollary} \newtheorem{remark}[theorem]{%
Remark}  %
\numberwithin{equation}{section}
\begin{document}

\begin{center}

\textbf{\Large SYNCHRONIZATION OF COUPLED STOCHASTIC SYSTEMS}

\vskip 0.2cm

\textbf{\Large WITH MULTIPLICATIVE NOISE}

\vskip 1cm

{\large ZHONGWEI SHEN and SHENGFAN ZHOU$^{*}$}

\vskip 0.2cm

{\small \textit{Department of Applied Mathematics, Shanghai Normal
University,}}

{\small \textit{Shanghai 200234, People's Republic of China}}

{\small \textit{shenzhongwei1985@hotmail.com}}

{\small \textit{$^*$zhoushengfan@yahoo.com}}

\vskip 0.2cm

{\large XIAOYING HAN}

{\small \textit{Department of Mathematics and Statistics, Auburn
University,}}

{\small \textit{Auburn, AL 36849, USA}}

{\small \textit{xzh0003@auburn.edu}}

\vskip 1cm

\begin{minipage}[c]{13cm}

\noindent {\small We consider the synchronization of solutions to
coupled systems of the conjugate random ordinary differential
equations (RODEs) for the $N$-Stratronovich stochastic ordinary
differential equations (SODEs) with linear multiplicative noise
($N\in \mathbb{N}$). We consider the synchronization between two
solutions and among different components of solutions under
one-sided dissipative Lipschitz conditions. We first show that the
random dynamical system generated by the solution of the coupled
RODEs has a singleton sets random attractor which implies the
synchronization of any two solutions. Moreover, the singleton sets
random attractor determines a stationary stochastic solution of the
equivalently coupled SODEs. Then we show that any solution of the
RODEs converge to a solution of the averaged RODE within any finite
time interval as the coupled coefficient tends to infinity. Our
results generalize the work of two Stratronovich SODEs in \cite{9}.

\vspace{5pt}

\noindent\textit{Keywords}: Synchronization; random dynamical
systems; multiplicative noise.

\vspace{5pt}

\noindent AMS Subject Classification: 60H10, 34F05, 37H10.}

\end{minipage}
\end{center}

\section{Introduction}

The synchronization of coupled systems is a well known phenomenon in
both biology and physics.  It is also known to occur in many other
different fields. Descriptions of its diversity of occurrence can be
found in \cite {1, 2, 3, 4, 7, 12, 13, 14, 15, 17, 20}.
Synchronization of deterministic coupled systems has been
investigated mathematically in \cite{5, 11, 19} for autonomous
systems and in \cite{16} for nonautonomous systems. For coupled
systems of It\^{o} stochastic ordinary differential equations with
additive noise, Caraballo \& Kloeden proved its synchronization of
solutions in \cite{8}.

Let $(\Omega,\mathcal{F},\mathbb{P})$ be a probability space, where
\[
\begin{split}
\Omega =\big\{\omega \in {C}(\mathbb{R},\mathbb{R}):\omega (0)=0\big\}=C_0(%
\mathbb{R} ,\mathbb{R}),
\end{split}
\]
the Borel $\sigma $-algebra $\mathcal{F}$ on $\Omega $ is generated
by the compact open topology (see \cite{6,14}), and $\mathbb{P}$ is
the corresponding Wiener measure on $(\Omega,\mathcal{F})$. Define
$(\theta_{t})_{t\in\mathbb{R}}$ on $\Omega $ via
\[
\begin{split}
\theta _t\omega (\cdot )=\omega (\cdot +t)-\omega (t),\quad
t\in\mathbb{R},
\end{split}
\]
then $(\Omega ,\mathcal{F},\mathbb{P},(\theta_t)_{t\in\mathbb{R}})$
becomes an ergodic metric dynamical system.

Consider the following $N$-Stratonovich stochastic ordinary
differential equations (SODEs) in $\mathbb{R}^d$ ($d\in
\mathbb{N}$):
\begin{equation}\label{original-sode}
\begin{split}
dX_t^{(j)}=f^{(j)}(X_t^{(j)})dt+\sum\limits_{i=1}^mc_i^{(j)}X_t^{(j)}\circ
dW_t^{(i)},\quad j=1,\dots ,N,
\end{split}
\end{equation}
where $c_i^{(j)}\in \mathbb{R},$ $W_t^{(i)}$ are independent two-sided
scalar Wiener processes on $(\Omega ,\mathcal{F},\mathbb{P})$ for $i=1,\dots
,m$, and $f^{(j)},$ $j=1,...,N$, are regular enough to ensure the existence
and uniqueness of solutions and satisfy the one-sided dissipative Lipschitz
conditions
\begin{equation}\label{dissipative-condition}
\begin{split}
\Big\langle x_1-x_2,f^{(j)}(x_1)-f^{(j)}(x_2)\Big\rangle\leq
-L\|x_1-x_2\|^2,\quad j=1,\dots ,N
\end{split}
\end{equation}
on $\mathbb{R}^d$ for some $L>0$.

Set
\[
\begin{split}
x^{(j)}(t,\omega )=e^{-O_t^{(j)}(\omega )}X_t^{(j)}(\omega ),\quad t\in %
\mathbb{R},\quad \omega \in \Omega ,\quad j=1,\dots ,N,
\end{split}
\]
where
\[
\begin{split}
O_t^{(j)}=\sum\limits_{i=1}^mc_i^{(j)}e^{-t}\int_{-\infty }^te^\tau dW_\tau
^{(i)},\quad j=1,\dots ,N,
\end{split}
\]
are $N$ stationary Ornstein-Uhlenbeck processes which solve the following
Ornstein-Uhlenbeck stochastic differential equations, respectively,
\[
\begin{split}
dO_t^{(j)}=-O_t^{(j)}dt+\sum\limits_{i=1}^mc_i^{(j)}dW_t^{(i)},\quad
j=1,\dots ,N.
\end{split}
\]
Then SODEs \eqref{original-sode} can be transformed into the
following conjugate pathwise random ordinary differential equations
(RODEs)
\begin{equation}\label{conjugate-rode}
\begin{split}
\frac{dx^{(j)}}{dt}& =F^{(j)}(x^{(j)},O_t^{(j)}(\omega )) \\
& :=e^{-O_t^{(j)}(\omega )}f^{(j)}(e^{O_t^{(j)}(\omega
)}x^{(j)})+O_t^{(j)}(\omega )x^{(j)},\quad j=1,\dots ,N
\end{split}
\end{equation}
(see \cite{18} for the conjugate theory of SODE and RODE).

Now we consider the linear coupled RODEs of \eqref{conjugate-rode}
\begin{equation}\label{linear-coupled-rode}
\begin{split}
\frac{dx^{(j)}}{dt}=F^{(j)}(x^{(j)},O_t^{(j)}(\omega ))+\nu \big(%
x^{(j-1)}-2x^{(j)}+x^{(j+1)}\big),\quad j=1,\dots ,N
\end{split}
\end{equation}
with coupling coefficient $\nu >0$, where $%
x^{(0)}=x^{(N)}$ and $x^{(N+1)}=x^{(1)}$.  Now
\eqref{linear-coupled-rode} can be written as the following
equivalent SODEs
\begin{equation}\label{equivalent-sode}
\begin{split}
dX_t^{(j)} &=\bigg(f^{(j)}(X_t^{(j)})+\nu \big(e^{\rho
_t^{(j)}}X_t^{(j-1)}-2X_t^{(j)}+e^{\varrho
_t^{(j)}}X_t^{(j+1)}\big)\bigg)dt\\
&\quad +\sum\limits_{i=1}^mc_i^{(j)}X_t^{(j)}\circ dW_t^{(i)},\quad
j=1,\dots ,N,
\end{split}
\end{equation}
where $\rho _t^{(j)}=O_t^{(j)}-O_t^{(j-1)},\varrho
_t^{(j)}=O_t^{(j)}-O_t^{(j+1)},O_t^{(0)}=O_t^{(N)}$ and $%
O_t^{(N+1)}=O_t^{(1)}$.

For synchronization of solutions to coupled RODEs
\eqref{linear-coupled-rode}, there are two cases: one for any two
solutions and the other for components of solutions. When $N=2$,
i.e. for two Stratonovich SODEs, Caraballo, Kloeden \& Neuenkirch
\cite{9} considered both types of synchronization. Under the
assumption of one-sided dissipative Lipschitz conditions
\eqref{dissipative-condition}, they first proved that
synchronization of any two solutions occurs and the random dynamical
system generated by the solution of
\eqref{linear-coupled-rode}$_{N=2}$ has a singleton sets random
attractor; then they proved that the synchronization between any two
components of solutions occurs as the coupled coefficient $\nu$
tends to infinity. Moreover, when the driving noise is same in each
system, exact synchronization occurs no matter how large the
intensity coefficients of noise are. Based on the work of \cite{9},
in this paper we consider the above two types of synchronization of
solutions of \eqref{linear-coupled-rode} in the case of $N\geq 3$
and obtain similar results. Explicitly, we show that the random
dynamical system generated by the solution of the coupled RODEs
\eqref{linear-coupled-rode} has a singleton sets random attractor
which implies the synchronization of any two solutions of
\eqref{linear-coupled-rode}. Moreover, the singleton sets random
attractor determines a stationary stochastic solution of the
equivalently coupled SODEs \eqref{equivalent-sode}.  We also show
that any solutions of RODEs \eqref{linear-coupled-rode} converge to
a solution $\bar z(t,\omega )$ of the averaged RODE
\begin{equation}\label{average-rode}
\begin{split}
\frac{dz}{dt}=\frac
1N\sum\limits_{j=1}^Ne^{-O_t^{(j)}}f^{(j)}(e^{O_t^{(j)}}z)+\frac
1N\sum\limits_{j=1}^NO_t^{(j)}z
\end{split}
\end{equation}
as the coupling coefficient $\nu\rightarrow\infty$. Here it is worth
to mention that this generalization is not trivial since some new
techniques are used especially in section \ref{syn-comp-solutions}.

The rest of this paper is organized as follows. In section
\ref{two-lemmas}, we introduce two lemmas which will be used
frequently. In section \ref{syn-two-solutions}, we show
synchronization of two solutions to the coupled RODEs and obtain the
stationary stochastic solution to the equivalent SODEs. In section
\ref{syn-comp-solutions}, we study synchronization of components of
solutions to the coupled RODEs and obtain the exact synchronization
of the equivalent SODEs provided by same driving noise.


\section{Two Lemmas}\label{two-lemmas}

We will frequently use the following two lemmas.

\begin{lemma}\label{property-OU}
There exists a $(\theta_t)_{t\in \mathbb{R}}$ invariant subset
$\overline{\Omega}\in\mathcal{F}$ of
$\Omega=C_0(\mathbb{R},\mathbb{R})$ of full measure such that for
$\omega\in\overline{\Omega}$,
\begin{equation}\label{sublinear-growth}
\begin{split}
\lim\limits_{t\rightarrow\pm\infty}\frac{|\omega(t)|}t=0,
\end{split}
\end{equation}
and for $j=1,...,N,$ there exist random variables
$\overline{O}^{(j)}=O_t^{(j)}$ and $T_{\omega}>0$ such that
\begin{equation}\label{ergodic-property}
\begin{split}
\overline{O}^{(j)}(\theta _t\omega)=O_t^{(j)}(\omega),\quad
\lim\limits_{t\rightarrow\pm\infty}\frac
1t\int_0^t\overline{O}^{(j)}(\theta_{\tau}\omega)d\tau=0,\quad
\omega\in\overline{\Omega},
\end{split}
\end{equation}
and
\[
\begin{split}
e^{2\int_s^tO_\tau ^{(j)}d\tau }\leq e^{\frac L2(t-s)}\quad for%
-s,t>T_\omega .
\end{split}
\]
\end{lemma}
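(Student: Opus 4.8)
My plan is to produce the three assertions on three full-measure $(\theta_t)$-invariant subsets of $\Omega$ and take $\overline{\Omega}$ to be their intersection. For the sublinear growth \eqref{sublinear-growth} I would invoke the law of the iterated logarithm for the two-sided Wiener process, $\limsup_{t\to\pm\infty}|\omega(t)|/\sqrt{2|t|\log\log|t|}=1$ a.s., which forces $|\omega(t)|/t\to0$ as $t\to\pm\infty$; let $\Omega_1$ be this full-measure set. Since $\theta_t\omega(s)=\omega(s+t)-\omega(t)$ and $(s+t)/s\to1$ as $s\to\pm\infty$, a one-line estimate shows $\Omega_1$ is $(\theta_t)$-invariant; if one prefers, replace $\Omega_1$ by $\bigcap_{q\in\mathbb{Q}}\theta_q\Omega_1$ and use continuity of $t\mapsto\theta_t$.

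On $\Omega_1$ the improper integral $\int_{-\infty}^t e^\tau\,dW_\tau^{(i)}$ has, after integration by parts, a pathwise continuous representative, so $O_t^{(j)}(\omega)$ is defined for every $\omega\in\Omega_1$ and every $t$, and stationarity of the Ornstein--Uhlenbeck process yields the pathwise cocycle identity $\overline{O}^{(j)}(\theta_t\omega)=O_t^{(j)}(\omega)$, where $\overline{O}^{(j)}(\omega):=O_0^{(j)}(\omega)$ (see \cite{6,14,18}). Each $\overline{O}^{(j)}$ is a centered Gaussian random variable, hence $\overline{O}^{(j)}\in L^1(\Omega,\mathcal F,\mathbb P)$ with $\mathbb E[\overline{O}^{(j)}]=0$. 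Since $(\Omega,\mathcal F,\mathbb P,(\theta_t)_{t\in\mathbb R})$ is ergodic, Birkhoff's ergodic theorem, applied to $(\theta_t)_{t\ge0}$ and to the (equally ergodic) reversed flow $(\theta_{-t})_{t\ge0}$, gives a full-measure $(\theta_t)$-invariant set $\Omega_2\subseteq\Omega_1$ on which
\[
\lim_{t\to\pm\infty}\frac1t\int_0^t\overline{O}^{(j)}(\theta_\tau\omega)\,d\tau=\mathbb E[\overline{O}^{(j)}]=0,\qquad j=1,\dots,N,
\]
which is \eqref{ergodic-property}.

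For the exponential estimate, fix $\omega\in\Omega_2$ and apply the limit above with $\varepsilon=L/4$: there is $T_\omega>0$ (the largest of the $N$ thresholds) such that $\big|\frac1r\int_0^r O_\tau^{(j)}\,d\tau\big|<L/4$ whenever $|r|>T_\omega$, for all $j$. For $-s,t>T_\omega$ split $\int_s^t O_\tau^{(j)}\,d\tau=\int_0^t O_\tau^{(j)}\,d\tau-\int_0^s O_\tau^{(j)}\,d\tau$ and multiply the averaged bounds by $t>0$ and by $-s>0$ respectively to obtain $\int_s^t O_\tau^{(j)}\,d\tau<\frac L4 t+\frac L4(-s)=\frac L4(t-s)$, hence $e^{2\int_s^t O_\tau^{(j)}\,d\tau}\le e^{\frac L2(t-s)}$. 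Taking $\overline{\Omega}=\Omega_2$ finishes the argument.

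The individual ingredients — the law of the iterated logarithm, $L^1$-integrability of a Gaussian, Birkhoff's theorem — are all standard; the only point demanding care is the bookkeeping that makes every conclusion hold simultaneously for \emph{every} $\omega$ in a single $(\theta_t)$-invariant set of full measure. In particular the cocycle identity for $O_t^{(j)}$ must be the genuinely pathwise one obtained from the integration-by-parts representation, not the merely almost-sure identity for the raw It\^{o} integral; that is the (mild) technical heart of the lemma.
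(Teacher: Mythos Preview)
Your proposal is correct and, for the part the paper actually proves (the exponential bound $e^{2\int_s^t O_\tau^{(j)}d\tau}\le e^{\frac L2(t-s)}$), your argument is essentially identical to the paper's: both split $\int_s^t=\int_0^t+\int_s^0$, invoke the ergodic limit with threshold $L/4$ separately on each piece, and take $T_\omega$ to be the maximum of the two resulting times. The only difference is that the paper simply cites \cite{9,10} for \eqref{sublinear-growth} and \eqref{ergodic-property}, whereas you supply explicit arguments (law of the iterated logarithm for the first, integration-by-parts pathwise definition plus Birkhoff for the second); these are exactly the standard arguments behind those citations, so your route is not genuinely different, just more self-contained.
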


\begin{proof}
The equalities \eqref{sublinear-growth}-\eqref{ergodic-property} can
be found in \cite{9,10}. By \eqref{ergodic-property},
$\lim\limits_{t\rightarrow\infty}\frac{1}{t}\int_0^tO_\tau^{(j)}d\tau=0$,
thus, there exists $T_{\omega}(1)>0$ such that
$\int_0^tO_\tau^{(j)}d\tau\leq\frac{L}{4}t$ for $t>T_{\omega}(1)$.
Similarly,
$\lim\limits_{s\rightarrow-\infty}\frac{1}{s}\int_s^0O_\tau^{(j)}d\tau=0$
implies that there exists $T_{\omega}(2)>0$ such that
$\int_s^0O_\tau^{(j)}d\tau\leq-\frac{L}{4}s$ for $-s>T_{\omega}(2)$.
Taking $T_{\omega}=\max\{T_{\omega}(1),T_{\omega}(2)\}$, we have
$2\int_s^tO_\tau^{(j)}d\tau\leq\frac{L}{2}(t-s)$ for
$-s,t>T_{\omega}$, which yields the assertion.
\end{proof}

We remark that the proof of \eqref{sublinear-growth} and
\eqref{ergodic-property} requires the ergodicity of the metric
dynamical system
$(\Omega,\mathcal{F},\mathbb{P},(\theta_t)_{t\in\mathbb{R}})$. In
the following sections, since $\overline{\Omega}$ is an
$(\theta_t)_{t\in\mathbb{R}}$ invariant set with full measure, we
consider $(\theta_t)_{t\in\mathbb{R}}$ defined on
$\overline{\Omega}$ instead of $\Omega$. This mapping has the same
properties as the original one if we choose for $\mathcal{F}$ the
trace $\sigma$-algebra with respect to $\overline{\Omega}$.

\begin{lemma}\label{diff-inte-ineq}
Suppose that $A(t)$ is a $p\times p$ matrix and $\varphi (t),\,\psi
(t)$ are $p$-dimensional vectors on $[t_0,t](t\geq t_0,\,t,t_0\in
\mathbb{R})$ which are sufficiently regular. If the following
inequality holds in the componentwise sense
\begin{equation}\label{differential-inequality}
\frac d{dt}\varphi (t)\leq A(t)\varphi (t)+\psi (t),\quad t\geq t_0,
\end{equation}
then
\begin{equation}\label{integral-inequality}
\begin{split}
\varphi (t)\leq \exp \Bigg(\int_{t_0}^tA(\tau )d\tau \Bigg)\varphi
(t_0)+\int_{t_0}^t\exp \Bigg(\int_u^tA(\tau )d\tau \Bigg)\psi (u)du,\quad
t\geq t_0.
\end{split}
\end{equation}
\end{lemma}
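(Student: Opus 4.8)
The statement to prove is: the componentwise differential inequality $\tfrac{d}{dt}\varphi(t) \le A(t)\varphi(t)+\psi(t)$ on $[t_0,t]$ implies the integral inequality $\varphi(t)\le \exp\big(\int_{t_0}^t A(\tau)d\tau\big)\varphi(t_0) + \int_{t_0}^t \exp\big(\int_u^t A(\tau)d\tau\big)\psi(u)\,du$, where $\le$ is understood entrywise. Wait — I should pause here: for a general $p\times p$ matrix $A(t)$, the operator $\exp(\int_u^t A)$ need not be order-preserving on $\mathbb{R}^p$ (it preserves the nonnegative cone iff the off-diagonal entries of $A(t)$ are nonnegative, i.e. $A$ is a Metzler / essentially nonnegative matrix, and additionally one needs the matrices $\int_u^t A(\tau)d\tau$ to commute for the variation-of-constants identity to hold). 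So the lemma as literally stated is true only under the implicit hypothesis that $A(t)$ is essentially nonnegative (off-diagonal entries $\ge 0$) and, in the non-commuting case, that one replaces $\exp(\int_u^t A)$ by the fundamental matrix solution $\Phi(t,u)$ of $\dot Y = A(t)Y$; since in all the intended applications in this paper $A(t)$ will in fact have nonnegative off-diagonal entries (it comes from the coupling Laplacian plus dissipative terms) and the matrices involved will commute (typically $A(t)$ scalar-plus-fixed-matrix), the clean exponential form is what one wants. I will carry out the argument assuming $A(t)$ is essentially nonnegative and I will use $\Phi(t,u):=\exp\big(\int_u^t A(\tau)d\tau\big)$, noting where commutativity/essential-nonnegativity is used.

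The plan is the standard variation-of-parameters / Gronwall comparison argument. First I would set $g(t):=A(t)\varphi(t)+\psi(t) - \tfrac{d}{dt}\varphi(t)\ge 0$ (entrywise, by hypothesis), so that $\varphi$ solves exactly the linear ODE $\dot\varphi = A(t)\varphi + \psi(t) - g(t)$. Then by the variation-of-constants formula for linear systems,
\[
\varphi(t) = \Phi(t,t_0)\varphi(t_0) + \int_{t_0}^t \Phi(t,u)\big(\psi(u)-g(u)\big)\,du,
\]
where $\Phi(t,u)$ is the transition matrix of $\dot Y=A(t)Y$; under the commuting assumption $\Phi(t,u)=\exp(\int_u^t A(\tau)d\tau)$. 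The key structural fact is that $\Phi(t,u)$ maps the nonnegative cone into itself for $t\ge u$: this is precisely the Kamke/Perron–Frobenius property of essentially nonnegative matrices, proved e.g. by writing $A(\tau) = -\lambda I + (A(\tau)+\lambda I)$ with $\lambda$ large enough that $A(\tau)+\lambda I\ge 0$ entrywise, so $\Phi(t,u)=e^{-\lambda(t-u)}\exp(\int_u^t (A+\lambda I))$ and the second factor, being an exponential of a nonnegative matrix, has a nonnegative power series. Consequently $\int_{t_0}^t \Phi(t,u)g(u)\,du \ge 0$ entrywise (integral of a nonnegative vector), and dropping this term from the identity above yields exactly \eqref{integral-inequality}.

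I expect the only real subtlety — and the thing the authors are implicitly glossing — to be exactly this cone-invariance/commutativity point: the lemma is \emph{false} for arbitrary matrices $A(t)$ (e.g. $A = \left(\begin{smallmatrix}0&-1\\0&0\end{smallmatrix}\right)$ already breaks monotonicity), so a careful writeup must either state the essential-nonnegativity hypothesis or verify it in each application; the regularity assumption on $\varphi,\psi,A$ is only needed to make the ODE theory (existence of $\Phi$, differentiation under the integral, fundamental theorem of calculus) go through routinely. Everything else — the variation-of-constants formula, nonnegativity of the integral of a nonnegative function — is elementary. So the proof is short: (1) introduce the nonnegative slack $g$, (2) write the exact variation-of-constants representation, (3) invoke monotonicity of the transition matrix $\Phi(t,u)\ge 0$, (4) discard the nonnegative integral term.
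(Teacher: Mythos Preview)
Your argument is correct (under the essential-nonnegativity hypothesis you rightly add) and takes a genuinely different route from the paper. The paper's proof is the one-line integrating-factor trick: it multiplies the inequality $\dot\varphi - A(t)\varphi \le \psi$ on the left by $\exp\big(-\int_{t_0}^t A\big)$, recognises the left side as $\tfrac{d}{dt}\big[\exp(-\int_{t_0}^t A)\varphi\big]$, integrates, and then multiplies back by $\exp\big(\int_{t_0}^t A\big)$. Your approach instead introduces the nonnegative slack $g=A\varphi+\psi-\dot\varphi$, writes the exact variation-of-constants formula, and discards $\int\Phi(t,u)g(u)\,du\ge 0$ using cone-invariance of the transition matrix.

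What your route buys is not merely cosmetic. The paper's argument tacitly needs \emph{both} $\exp\big(\int A\big)\ge 0$ and $\exp\big(-\int A\big)\ge 0$ entrywise to preserve the two componentwise inequalities it uses; the second of these fails precisely for the Metzler matrices with strictly positive off-diagonal entries (the coupling matrices $A_\nu$, $\widetilde A_\nu$, $\mathbf A_\nu$) that the paper actually applies the lemma to. Your slack-variable argument uses only $\Phi(t,u)\ge 0$, i.e.\ the Metzler property itself, so it covers exactly the cases needed. You also correctly isolate the commutativity issue behind writing $\Phi(t,u)=\exp\big(\int_u^t A\big)$; in the paper's applications the matrices do commute (each $A(t)$ is a fixed constant matrix plus a scalar multiple of the identity), so the exponential form is legitimate there, but your caveat is well placed.
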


\begin{proof}
It follows from \eqref{differential-inequality} that
\[
\begin{split}
\frac{d}{dt}\Bigg(\exp\Bigg(-\int_{t_0}^tA(\tau)d\tau\Bigg)\varphi(t)\Bigg)
&=\exp\Bigg(-\int_{t_0}^tA(\tau)d\tau\Bigg)\big(\frac{d}{dt}\varphi(t)-A(t)\varphi(t)\big)\\
&\leq\exp\Bigg(-\int_{t_0}^tA(\tau)d\tau\Bigg)\psi(t),\\
\end{split}
\]
then
\[
\begin{split}
\exp\Bigg(-\int_{t_0}^tA(\tau)d\tau\Bigg)\varphi(t)-\varphi(t_0)\leq
\int_{t_0}^t\exp\Bigg(-\int_{t_0}^uA(\tau)d\tau\Bigg)\psi(u)du,
\end{split}
\]
which implies inequality \eqref{integral-inequality}.
\end{proof}

\section{Synchronization of Two Solutions}\label{syn-two-solutions}

Consider the coupled RODEs \eqref{linear-coupled-rode}
\begin{equation}\label{linear-coupled-rode-again}
\begin{split}
\frac{dx^{(j)}}{dt}=F^{(j)}(x^{(j)},O_t^{(j)}(\omega ))+\nu \big(%
x^{(j-1)}-2x^{(j)}+x^{(j+1)}\big),\quad j=1,\dots ,N
\end{split}
\end{equation}
with initial data
\begin{equation}\label{initial-data}
\begin{split}
x^{(j)}(0,\omega )=x_0^{(j)}(\omega )\in \mathbb{R}^d,\quad
\omega\in\Omega ,\quad j=1,\dots ,N,
\end{split}
\end{equation}
where $\nu >0,$ and
\begin{equation}\label{brief-notation-F}
\begin{split}
F^{(j)}(x^{(j)},O_t^{(j)}(\omega ))=e^{-O_t^{(j)}(\omega
)}f^{(j)}(e^{O_t^{(j)}(\omega )}x^{(j)})+O_t^{(j)}(\omega
)x^{(j)},\quad j=1,\dots ,N.
\end{split}
\end{equation}
Here $f^{(j)}$ are regular enough to ensure the existence and
uniqueness of global solutions on $\mathbb{R}$ and satisfy the
one-sided dissipative Lipschitz conditions
\eqref{dissipative-condition} for $j=1,...,N$.

For asymptotic behavior of the difference between two solutions of
RODEs \eqref{linear-coupled-rode-again}-\eqref{initial-data}, we
have

\begin{lemma}\label{two-solution-converge}
For any two solutions $\big(x_1^{(1)}(t),x_1^{(2)}(t),\dots
,x_1^{(N)}(t)\big)^{\top}$ and $\big(x_2^{(1)}(t),x_2^{(2)}(t),\dots
,x_2^{(N)}(t)\big)^{\top}$ of RODEs
\eqref{linear-coupled-rode-again}-\eqref{initial-data} (omitting
$\omega$ for brevity),
\[
\lim\limits_{t\rightarrow \infty
}\|x_1^{(j)}(t)-x_2^{(j)}(t)\|=0,\quad j=1,\dots ,N,
\]
that is, all solutions of the coupled RODEs
\eqref{linear-coupled-rode-again}-\eqref{initial-data} converge
pathwise to each other as time $t$ goes to infinity.
\end{lemma}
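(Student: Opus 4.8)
The plan is to reduce the assertion to a finite-dimensional comparison estimate for the vector of squared component differences, and then invoke Lemma~\ref{diff-inte-ineq} together with the sublinear growth of the Ornstein--Uhlenbeck integrals from Lemma~\ref{property-OU}.

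First I would set $u^{(j)}=x_1^{(j)}-x_2^{(j)}$ and $v^{(j)}=\|u^{(j)}\|^2$, subtract the two copies of \eqref{linear-coupled-rode-again}, and take the inner product with $u^{(j)}$. Writing $y_k=e^{O_t^{(j)}}x_k^{(j)}$, so that $u^{(j)}=e^{-O_t^{(j)}}(y_1-y_2)$, the nonlinear part of $F^{(j)}$ yields $\langle u^{(j)},e^{-O_t^{(j)}}(f^{(j)}(y_1)-f^{(j)}(y_2))\rangle=e^{-2O_t^{(j)}}\langle y_1-y_2,f^{(j)}(y_1)-f^{(j)}(y_2)\rangle\le-Lv^{(j)}$ by \eqref{dissipative-condition}, the linear part of $F^{(j)}$ contributes $O_t^{(j)}v^{(j)}$, and Young's inequality bounds the coupling contribution by $\tfrac12 v^{(j-1)}+\tfrac12 v^{(j+1)}-v^{(j)}$. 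Collecting these and multiplying by two gives, for $j=1,\dots,N$,
\[
\frac{d}{dt}v^{(j)}\le 2\big(O_t^{(j)}-L\big)v^{(j)}+\nu\big(v^{(j-1)}-2v^{(j)}+v^{(j+1)}\big).
\]

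Next I would read this as $\tfrac{d}{dt}v\le A(t)v$ in the componentwise sense, with $v=(v^{(1)},\dots,v^{(N)})^{\top}$ and $A(t)=-2LI+2\operatorname{diag}(O_t^{(1)},\dots,O_t^{(N)})-\nu\mathcal L$, where $\mathcal L$ is the Laplacian of the coupling cycle; $A(t)$ is symmetric, and $\langle\mathcal L\xi,\xi\rangle=\sum_j(\xi^{(j)}-\xi^{(j+1)})^2\ge0$, so $\mathcal L$ is positive semidefinite. Applying Lemma~\ref{diff-inte-ineq} with $t_0=0$ and $\psi\equiv0$ gives $0\le v(t)\le\exp(M(t))v(0)$ componentwise, where $M(t)=\int_0^tA(\tau)d\tau=-2LtI-\nu t\,\mathcal L+2\operatorname{diag}\!\big(\int_0^tO_\tau^{(1)}d\tau,\dots,\int_0^tO_\tau^{(N)}d\tau\big)$ is again symmetric and the left inequality is automatic since every $v^{(j)}(t)\ge0$. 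Taking Euclidean norms in $\mathbb R^N$ and using that the spectral norm of $\exp(M(t))$ equals $e^{\lambda_{\max}(M(t))}$, then Weyl's inequality and $\mathcal L\ge0$,
\[
|v(t)|\le e^{\lambda_{\max}(M(t))}|v(0)|,\qquad \lambda_{\max}(M(t))\le -2Lt+2\max_{1\le j\le N}\int_0^tO_\tau^{(j)}d\tau.
\]
By \eqref{ergodic-property}, $\tfrac1t\int_0^tO_\tau^{(j)}d\tau\to0$ for each $j$, so on $\overline\Omega$ there is $\tilde T_\omega>0$ with $\int_0^tO_\tau^{(j)}d\tau\le\tfrac L4 t$ for all $t\ge\tilde T_\omega$ and all $j$; hence $\lambda_{\max}(M(t))\le-\tfrac{3L}{2}t$ and $|v(t)|\le e^{-3Lt/2}|v(0)|\to0$ as $t\to\infty$. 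Since $v^{(j)}(t)=\|x_1^{(j)}(t)-x_2^{(j)}(t)\|^2$, this is exactly the assertion.

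The step I expect to require the most care --- and the reason the passage from $N=2$ to $N\ge3$ is not automatic --- is that the $N$ components are driven by \emph{different} Ornstein--Uhlenbeck processes $O_t^{(j)}$: one cannot simply add the scalar inequalities and extract a single Gronwall factor, since that would produce $\int_0^t\max_j O_\tau^{(j)}d\tau$, which by ergodicity is \emph{not} $o(t)$. Routing the estimate through the matrix form and Lemma~\ref{diff-inte-ineq} is precisely what allows one to integrate each $O_\tau^{(j)}$ first and take the maximum over the finitely many indices only afterwards, at which point the sublinear growth of Lemma~\ref{property-OU} applies. The inner-product computation and the Young-inequality bound on the coupling term are routine; in fact the coupling is inessential here, since $-\nu\mathcal L\le0$ contributes nothing to $\lambda_{\max}(M(t))$ and the dissipativity term $-2Lt$ alone forces the decay.
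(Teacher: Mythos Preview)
Your proof is correct and follows essentially the same route as the paper: both derive the same componentwise differential inequality for $\|x_1^{(j)}-x_2^{(j)}\|^2$, cast it in matrix form, apply Lemma~\ref{diff-inte-ineq}, and then bound the largest eigenvalue of the integrated symmetric matrix so that the sublinear growth of $\int_0^t O_\tau^{(j)}\,d\tau$ from Lemma~\ref{property-OU} forces exponential decay. The only cosmetic difference is that the paper bounds the quadratic form of $\int_0^t A_\nu(\tau)\,d\tau$ directly (their Lemma~\ref{estimation-expon-1}), whereas you split $M(t)$ via Weyl's inequality together with the positive semidefiniteness of the cycle Laplacian; the resulting decay constants ($-Lt$ versus $-\tfrac{3L}{2}t$) differ only by a harmless factor.
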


\begin{proof}
By the one-sided dissipative Lipschitz conditions \eqref{dissipative-condition}, we obtain for $j=1,\dots,N$,
\[
\begin{split}
\frac{d}{dt}\|x_1^{(j)}(t)-x_2^{(j)}(t)\|^2
&=2\Big\langle %
x_1^{(j)}(t)-x_2^{(j)}(t),\frac{d}{dt}x_1^{(j)}(t)-\frac{d}{dt}x_2^{(j)}(t)%
\Big\rangle\\
&=2e^{-O_t^{(j)}}\Big\langle %
x_1^{(j)}(t)-x_2^{(j)}(t),f^{(j)}(e^{O_t^{(j)}}x_1^{(j)}(t))-f^{(j)}(e^{O_t^{(j)}}x_2^{(j)}(t))%
\Big\rangle\\
& \quad
+\big(2O_t^{(j)}-4\nu\big)\|x_1^{(j)}(t)-x_2^{(j)}(t)\|^2\\
& \quad +2\nu\Big\langle %
x_1^{(j)}(t)-x_2^{(j)}(t),x_1^{(j-1)}(t)-x_2^{(j-1)}(t)+x_1^{(j+1)}(t)-x_2^{(j+1)}(t)%
\Big\rangle\\
&\leq\big(2O_t^{(j)}-2L-2\nu\big)\|x_1^{(j)}(t)-x_2^{(j)}(t)\|^2\\
& \quad
+\nu\|x_1^{(j-1)}(t)-x_2^{(j-1)}(t)\|^2+\nu\|x_1^{(j+1)}(t)-x_2^{(j+1)}(t)\|^2.
\end{split}
\]
 Define
\[
\begin{split}
\mathbf{x}(t)=\Big(\|x_1^{(1)}(t)-x_2^{(1)}(t)\|^2,
\|x_1^{(2)}(t)-x_2^{(2)}(t)\|^2
,\dots,\|x_1^{(N)}(t)-x_2^{(N)}(t)\|^2\Big)^{\top}, \
t\in\mathbb{R},
\end{split}
\]
and
\[
\begin{split}
A_\nu(t)=
\begin{pmatrix}
a_\nu^{(1)}(t)    &   \nu             &  0    &  \cdots  & 0 &  \nu\\
\nu               &  a_\nu^{(2)}(t)   &  \nu   & 0  & \cdots  &  0\\
 0        &   \nu       &   a_\nu^{(3)}(t)   & \ddots &  \ddots  &   \vdots \\
\vdots     & \ddots     & \ddots   &   \ddots  &  \nu  &   0\\
0   & \cdots  & 0  &  \nu &  a_\nu^{(N-1)}(t) &   \nu\\
\nu   &  0 & \cdots  & 0  &  \nu & a_\nu^{(N)}(t)
\end{pmatrix}, \ t\in\mathbb{R},
\end{split}
\]
where diagonal entries $a_\nu^{(j)}(t)=2O_t^{(j)}-2L-2\nu,\
j=1,\dots,N$. Thus  the above  differential inequalities can be written as
a simple form
\begin{equation}\label{diff-ineq-simple-form}
\begin{split}
\dot{\mathbf{x}}(t)\leq A_\nu(t)\mathbf{x}(t),
\end{split}
\end{equation}
componentwisely. By Lemma \ref{diff-inte-ineq} and
\eqref{diff-ineq-simple-form} we obtain
\begin{equation*}
\begin{split}
\mathbf{x}(t)\leq
\exp\Bigg(\int_0^tA_\nu(\tau)d\tau\Bigg)\mathbf{x}(0)
\end{split}
\end{equation*}
componentwisely.

The proof of this lemma will be completed in the following  Lemma
\ref{estimation-expon-1}.
\end{proof}

\begin{lemma}\label{estimation-expon-1}
For $t\geq T_\omega $ and $\nu >0$,
\begin{equation*}
\begin{split}
\big\|\exp\Bigg(\int_0^tA_\nu (\tau )d\tau
\Bigg)\mathbf{x}(0)\big\|\leq e^{-Lt}\|\mathbf{x}(0)\|,
\end{split}
\end{equation*}
where $T_\omega $ is defined as in Lemma \ref{property-OU}.

\begin{proof}
Matrix $\int_0^tA_\nu(\tau)d\tau$ is real symmetric, which implies that
there exists an orthonormal basis consisting of eigenvectors
$\eta_{\nu,t}^{(1)},\eta_{\nu,t}^{(2)},\dots,\eta_{\nu,t}^{(N)}$ of
$\mathbb{R}^{N}$ with eigenvalues
$\lambda_{\nu,t}^{(1)},\lambda_{\nu,t}^{(2)},\dots,\lambda_{\nu,t}^{(N)}$,
and therefore there exist
$c_{\mathbf{x}(0),\nu,t}^{(1)},c_{\mathbf{x}(0),\nu,t}^{(2)},\dots,c_{\mathbf{x}(0),\nu,t}^{(N)}$
such that
\[
\begin{split}
\mathbf{x}(0)=\sum\limits_{j=1}^Nc_{\mathbf{x},\nu,t}^{(j)}\eta_{\nu,t}^{(j)}.
\end{split}
\]

Since
$\eta_{\nu,t}^{(1)},\eta_{\nu,t}^{(2)},\dots,\eta_{\nu,t}^{(N)}$ are
orthogonal and
$\exp\Big(\int_0^tA_\nu(\tau)d\tau\Big)\eta_{\nu,t}^{(j)}=e^{\lambda_{\nu,t}^{(j)}}\eta_{\nu,t}^{(j)}$ for $
j=1,\dots,N$, we have
\begin{equation}\label{estimation-expon-2}
\begin{split}
\big\|\exp\Bigg(\int_0^tA_\nu(\tau)d\tau\Bigg)\mathbf{x}(0)\big\|^2
&=\big\|\sum\limits_{j=1}^Nc_{\mathbf{x}(0),\nu,t}^{(j)}\exp\Bigg(\int_0^tA_\nu(\tau)d\tau\Bigg)\eta_{\nu,t}^{(j)}\big\|^2\\
&=\big\|\sum\limits_{j=1}^Ne^{\lambda_{\nu,t}^{(j)}}c_{\mathbf{x}(0),\nu,t}^{(j)}\eta_{\nu,t}^{(j)}\big\|^2\\
&\leq
e^{2\max\{\lambda_{\nu,t}^{(1)},\lambda_{\nu,t}^{(2)},\dots,\lambda_{\nu,t}^{(N)}\}}\|\mathbf{x}(0)\|^2.
\end{split}
\end{equation}

Next, let us estimate the upper bound of eigenvalues of matrix
$\int_0^tA_\nu(\tau)d\tau$. The quadratic form satisfies
\[
\begin{split}
f(\xi_1,\xi_2,\dots,\xi_N) &=\xi^{\top}\Bigg(\int_0^tA_\nu(\tau)d\tau\Bigg)\xi\\
 &=\sum\limits_{j=1}^N\Bigg(2\int_0^tO_\tau^{(j)}d\tau-2Lt-2\nu{t}\Bigg)\xi_j^2+2\nu{t}\sum\limits_{j=1}^{N}\xi_j\xi_{j-1}\\
 &\leq\sum\limits_{j=1}^N\Bigg(2\int_0^tO_\tau^{(j)}d\tau-Lt\Bigg)\xi_j^2-Lt\sum\limits_{j=1}^N\xi_j^2\\
\end{split}
\]
where $\xi=(\xi_1,\xi_2,\dots,\xi_N)^{\top}\in\mathbb{R}^N$, and
$\xi_0=\xi_N$. Hence, it follows from Lemma \ref{property-OU} that
\begin{equation}\label{estimation-quadratic-form}
\begin{split}
f(\xi_1,\xi_2,\dots,\xi_N)\leq-Lt\sum\limits_{j=1}^N\xi_j^2,
\end{split}
\end{equation}
for $t\geq T_\omega$ and for all $\nu>0$.  Inequality
\eqref{estimation-quadratic-form} implies that the quadratic form is
negative definite and eigenvalues of $\int_0^tA_\nu(\tau)d\tau$
satisfy
\begin{equation}\label{estimation-eigenvalues}
\begin{split}
\max\Big\{\lambda_{\nu,t}^{(1)},\lambda_{\nu,t}^{(2)},\dots,\lambda_{\nu,t}^{(N)}\Big\}\leq-Lt.
\end{split}
\end{equation}
Combining \eqref{estimation-expon-2} and
\eqref{estimation-eigenvalues} yields the assertion.
\end{proof}
\end{lemma}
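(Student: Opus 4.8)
The key observation is that $\int_0^t A_\nu(\tau)\,d\tau$ is a real symmetric matrix, so it is orthogonally diagonalizable and its operator norm on $\mathbb{R}^N$ equals the largest absolute value of its eigenvalues; more to the point, $\exp(\int_0^t A_\nu(\tau)\,d\tau)$ is then symmetric with eigenvalues $e^{\lambda_{\nu,t}^{(j)}}$, so its operator norm is $e^{\max_j \lambda_{\nu,t}^{(j)}}$. Hence the whole statement reduces to the single estimate $\max_j \lambda_{\nu,t}^{(j)} \le -Lt$ for $t \ge T_\omega$ and all $\nu > 0$. The plan is therefore: first expand $\mathbf{x}(0)$ in an orthonormal eigenbasis $\{\eta_{\nu,t}^{(j)}\}$, use orthogonality and $\exp(\int_0^t A_\nu(\tau)\,d\tau)\eta_{\nu,t}^{(j)} = e^{\lambda_{\nu,t}^{(j)}}\eta_{\nu,t}^{(j)}$ to get $\|\exp(\int_0^t A_\nu(\tau)\,d\tau)\mathbf{x}(0)\|^2 \le e^{2\max_j \lambda_{\nu,t}^{(j)}}\|\mathbf{x}(0)\|^2$; then bound the eigenvalues by bounding the associated quadratic form.

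For the eigenvalue bound I would estimate the quadratic form directly. Writing $\xi = (\xi_1,\dots,\xi_N)^\top$ with the cyclic convention $\xi_0 = \xi_N$,
\[
\xi^\top\!\Bigg(\int_0^t A_\nu(\tau)\,d\tau\Bigg)\xi
= \sum_{j=1}^N\Bigg(2\int_0^t O_\tau^{(j)}\,d\tau - 2Lt - 2\nu t\Bigg)\xi_j^2 + 2\nu t \sum_{j=1}^N \xi_j\xi_{j-1}.
\]
The coupling term is controlled by $2\nu t\sum_j \xi_j\xi_{j-1} \le \nu t\sum_j(\xi_j^2 + \xi_{j-1}^2) = 2\nu t\sum_j \xi_j^2$ (using the cyclic relabeling in the second sum), which exactly cancels the $-2\nu t$ contribution from the diagonal. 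This leaves $\sum_j\big(2\int_0^t O_\tau^{(j)}\,d\tau - 2Lt\big)\xi_j^2$, and by Lemma \ref{property-OU} we have $2\int_0^t O_\tau^{(j)}\,d\tau \le \tfrac{L}{2}\,t < Lt$ for $t \ge T_\omega$ (taking $s = 0$ in the stated bound, or invoking $\int_0^t O_\tau^{(j)}\,d\tau \le \tfrac{L}{4}t$ from its proof), so the form is bounded above by $-Lt\sum_j \xi_j^2$. Since this holds for every unit vector $\xi$, every eigenvalue satisfies $\lambda_{\nu,t}^{(j)} \le -Lt$.

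The one genuinely delicate point — and the reason the $\nu$-independence matters — is verifying that the off-diagonal coupling really is absorbed uniformly in $\nu$: one must be careful that the circulant-type coupling term, after the cyclic reindexing $\sum_j \xi_j\xi_{j-1}$, is matched term-for-term against the diagonal $-2\nu t\sum_j\xi_j^2$, so that the coupling contributes zero net after the AM–GM step rather than something that grows with $\nu$. This works precisely because each row of $A_\nu$ has two off-diagonal entries equal to $\nu$ and the diagonal term is $-2\nu$ (plus the $O$ and $L$ pieces), i.e. the coupling matrix is the graph Laplacian of the cycle $C_N$, which is negative semidefinite. Once this cancellation is in hand, plugging $\max_j\lambda_{\nu,t}^{(j)} \le -Lt$ into the eigenbasis estimate gives $\|\exp(\int_0^t A_\nu(\tau)\,d\tau)\mathbf{x}(0)\| \le e^{-Lt}\|\mathbf{x}(0)\|$, completing the proof (and, combined with Lemma \ref{two-solution-converge}'s reduction, showing $\|x_1^{(j)}(t) - x_2^{(j)}(t)\| \to 0$).
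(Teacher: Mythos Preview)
Your proposal is correct and follows essentially the same approach as the paper: diagonalize the symmetric matrix $\int_0^t A_\nu(\tau)\,d\tau$, reduce to bounding its largest eigenvalue via the quadratic form, cancel the $\nu$-terms by AM--GM on the cyclic coupling, and then apply Lemma~\ref{property-OU}. Your remark that the coupling part is the cycle graph Laplacian and your care with the $s=0$ application of Lemma~\ref{property-OU} are both on point.
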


Now we use the theory of random dynamical systems to find what the
solutions of \eqref{linear-coupled-rode-again}-\eqref{initial-data}
will converge to. It is easy to see from \cite{6} that the solution
\[
\phi
(t,\omega )=\big(x^{(1)}(t,\omega ),x^{(2)}(t,\omega ),...,x^{(N)}(t,\omega )%
\big)^{\top},\quad\omega \in \Omega
\]
of \eqref{linear-coupled-rode-again}-\eqref{initial-data} generates
a random dynamical system over
$(\Omega,\mathcal{F},\mathbb{P},(\theta_t)_{t\in\mathbb{R}})$ with
state space $\mathbb{R}^{Nd}.$ For this random dynamical system
$\phi(t,\omega ),$ we have
\begin{theorem}
$\phi (t,\omega ),$ $t\in \mathbb{R},$ $\omega \in \Omega ,$ has a
singleton sets random attractor $\big\{A_\nu (\omega )\big\}$ where
\[
\begin{split}
A_\nu (\omega )=\big(\bar x_\nu ^{(1)}(\omega ),\bar x_\nu
^{(2)}(\omega ),\dots ,\bar x_\nu ^{(N)}(\omega )\big)^{\top},
\end{split}
\]
which implies the synchronization of any two solutions of
\eqref{linear-coupled-rode-again}-\eqref{initial-data}. Moreover,
\[
\begin{split}
\Big(\bar x_\nu ^{(1)}(\theta _t\omega )e^{O_t^{(1)}(\omega )},\bar
x_\nu ^{(2)}(\theta _t\omega )e^{O_t^{(2)}(\omega )},\dots ,\bar
x_\nu ^{(N)}(\theta _t\omega )e^{O_t^{(N)}(\omega )}\Big)^{\top}
\end{split}
\]
is the stationary stochastic solution of the equivalently coupled
SODEs \eqref{equivalent-sode}.
\end{theorem}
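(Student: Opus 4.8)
The plan is to establish the existence of the singleton-sets random attractor via the standard machinery for contracting random dynamical systems, then identify the attractor fiber as a stationary solution and pull it back through the conjugacy to the SODEs. The key ingredient is already in hand: Lemma~\ref{two-solution-converge} (via Lemmas~\ref{estimation-expon-1} and \ref{property-OU}) shows that any two solutions converge pathwise to each other at a uniform exponential rate $e^{-Lt}$ for $t \ge T_\omega$. This is precisely a pathwise contraction property, and it forces any random attractor to be a singleton.

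\textbf{Step 1: Existence of an absorbing set.} First I would show that $\phi(t,\omega)$ possesses a tempered random absorbing set. Using the same one-sided dissipative estimate as in the proof of Lemma~\ref{two-solution-converge}, but now applied to a single solution against the reference point $0$ (i.e.\ estimating $\frac{d}{dt}\|x^{(j)}(t)\|^2$ directly), one gets an inequality of the form $\dot{\mathbf{y}}(t) \le A_\nu(t)\mathbf{y}(t) + \mathbf{b}(t,\omega)$ where $\mathbf{y}(t)$ collects the $\|x^{(j)}(t)\|^2$ and $\mathbf{b}$ comes from the inhomogeneous term $e^{-O_t^{(j)}}f^{(j)}(0)$ (or its bound). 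Applying Lemma~\ref{diff-inte-ineq} and then the eigenvalue bound from Lemma~\ref{estimation-expon-1} (the same matrix $A_\nu$ appears), one obtains pathwise pullback boundedness: $\|\phi(t,\theta_{-t}\omega)\|$ stays in a tempered ball $B(\omega)$ for $t$ large. The sublinear growth \eqref{sublinear-growth} and \eqref{ergodic-property} guarantee the relevant integrals of $O_\tau^{(j)}$ are controlled so that $B(\omega)$ is tempered.

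\textbf{Step 2: Existence and singleton structure of the attractor.} With a tempered absorbing set and the fact that $\phi(t,\omega)$ is continuous (hence the RDS is asymptotically compact, trivially in finite dimensions once we have a bounded absorbing set), the standard theorem on random attractors (as in \cite{6}) yields a random attractor $\{\mathcal{A}_\nu(\omega)\}$. To see it is a singleton: take any two points in $\mathcal{A}_\nu(\omega)$; by invariance they are $\phi(t,\theta_{-t}\omega)$-images of points in $\mathcal{A}_\nu(\theta_{-t}\omega) \subset B(\theta_{-t}\omega)$, and Lemma~\ref{two-solution-converge} (in its quantitative pullback form) forces their distance to be $\le e^{-Lt}\cdot(\text{tempered radius})\to 0$. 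Hence $\mathcal{A}_\nu(\omega) = \{A_\nu(\omega)\} = \big(\bar x_\nu^{(1)}(\omega),\dots,\bar x_\nu^{(N)}(\omega)\big)^\top$, and convergence of any two solutions to each other is immediate since both converge to this common fiber.

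\textbf{Step 3: Stationarity and transfer to the SODEs.} The invariance of the attractor, $\phi(t,\omega)A_\nu(\omega) = A_\nu(\theta_t\omega)$, says exactly that $t \mapsto A_\nu(\theta_t\omega)$ is a stationary (pathwise) solution of the coupled RODEs \eqref{linear-coupled-rode-again}. Now invert the Ornstein--Uhlenbeck conjugacy: since $X_t^{(j)} = e^{O_t^{(j)}(\omega)} x^{(j)}(t,\omega)$ transforms \eqref{conjugate-rode} into \eqref{original-sode} and the coupling terms transform correspondingly into those of \eqref{equivalent-sode}, substituting $x^{(j)}(t,\omega) = \bar x_\nu^{(j)}(\theta_t\omega)$ gives that $\big(\bar x_\nu^{(1)}(\theta_t\omega)e^{O_t^{(1)}(\omega)},\dots,\bar x_\nu^{(N)}(\theta_t\omega)e^{O_t^{(N)}(\omega)}\big)^\top$ solves \eqref{equivalent-sode}; its law is $\theta$-invariant because $\theta$ is measure-preserving and $O_t^{(j)}$ is itself stationary, so this is a stationary stochastic solution. \textbf{The main obstacle} I anticipate is the bookkeeping in Step~1: producing a genuinely \emph{tempered} absorbing set requires that the exponential moments $e^{2\int_s^t O_\tau^{(j)}d\tau}$ and the forcing integrals be controlled uniformly, and one must be a little careful that the bound holds for all $\nu>0$ (which the quadratic-form computation in Lemma~\ref{estimation-expon-1} does deliver, since the off-diagonal $\nu$-terms cancel against the diagonal). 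Everything else is an application of results already proved plus the standard random-attractor existence theorem.
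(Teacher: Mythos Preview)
Your proposal is correct and follows essentially the same route as the paper's proof: derive a componentwise differential inequality for $\|x^{(j)}(t)\|^2$ using the dissipativity against $0$ (picking up the inhomogeneous term $e^{-2O_t^{(j)}}\|f^{(j)}(0)\|^2/L$ via Young's inequality), apply Lemma~\ref{diff-inte-ineq} and the quadratic-form eigenvalue estimate of Lemma~\ref{estimation-expon-1}, obtain a tempered pullback absorbing ball, invoke the standard random-attractor existence theorem, and then use Lemma~\ref{two-solution-converge} to collapse the attractor to a singleton before transferring back through the Ornstein--Uhlenbeck conjugacy. The only minor inaccuracy is that the coefficient matrix arising in Step~1 is not exactly $A_\nu(t)$ but a perturbed version $\widetilde{A}_\nu(t)$ with diagonal entries $2O_t^{(j)}-L-2\nu$ (one loses an $L$ in the Young splitting), so the decay rate is $e^{-L(t-t_0)/2}$ rather than $e^{-L(t-t_0)}$; this changes nothing structurally.
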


\begin{proof}  First,
\[
\begin{split}
\frac{d}{dt}\|x^{(j)}(t)\|^2 &= 2\Big\langle x^{(j)}(t),\frac{d}{dt}%
x^{(j)}(t)\Big\rangle \\
&= 2\Big\langle x^{(j)}(t),e^{-O_t^{(j)}}f^{(j)}(e^{O_t^{(j)}}x^{(j)}(t))%
\Big\rangle+2\Big\langle x^{(j)}(t),O_t^{(j)}x^{(j)}(t)\Big\rangle \\
& \quad +2\nu\Big\langle x^{(j)}(t),x^{(j-1)}(t)-2x^{(j)}(t)+x^{(j+1)}(t)%
\Big\rangle \\
&\leq 2e^{-2O_t^{(j)}}\Big\langle %
e^{O_t^{(j)}}x^{(j)}(t)-0,f^{(j)}(e^{O_t^{(j)}}x^{(j)}(t))-f^{(j)}(0)%
\Big\rangle \\
& \quad +2e^{-O_t^{(j)}}\Big\langle x^{(j)}(t),f^{(j)}(0)\Big\rangle+\big(%
2O_t^{(j)}-4\nu\big)\|x^{(j)}(t)\|^2 \\
& \quad +2\nu\Big\langle x^{(j)}(t),x^{(j-1)}(t)+x^{(j+1)}(t)\Big\rangle \\
&\leq\big(2O_t^{(j)}-2L-2\nu\big)\|x^{(j)}(t)\|^2+\nu\|x^{(j-1)}(t)\|^2+\nu\|x^{(j+1)}(t)\|^2 \\
& \quad+2\|x^{(j)}(t)\|\|f^{(j)}(0)\|e^{-O_t^{(j)}} \\
&\leq\big(2O_t^{(j)}-L-2\nu\big)\|x^{(j)}(t)\|^2+\nu\|x^{(j-1)}(t)\|^2+\nu\|x^{(j+1)}(t)\|^2 \\
& \quad +\frac{e^{-2O_t^{(j)}}}{L}\|f^{(j)}(0)\|^2,
\end{split}
\]
for $j=1,\dots,N$. Analogous to \eqref{diff-ineq-simple-form}, we
obtain
\[
\begin{split}
\dot{\widetilde{\mathbf{x}}}(t)\leq \widetilde{A}_\nu(t)\widetilde{\mathbf{x}%
}(t)+\widetilde{\mathbf{f}}(t)
\end{split}
\]
with
\[
\begin{split}
\widetilde{\mathbf{x}}(t)=\Big(\|x^{(1)}(t)\|^2,\|x^{(2)}(t)\|^2,\dots,\|x^{(N)}(t)\|^2\Big)^{\top},\quad
t\in\mathbb{R},
\end{split}
\]
\[
\begin{split}
\widetilde{\mathbf{f}}(t)=\frac{1}{L}\Big(e^{-2O_t^{(1)}}\|f^{(1)}(0)\|^2,e^{-2O_t^{(2)}}\|f^{(2)}(0)\|^2,\dots,e^{-2O_t^{(N)}}\|f^{(N)}(0)\|^2\Big)^{\top},\quad
t\in\mathbb{R}
\end{split}
\]
and
\[
\begin{split}
\widetilde{A}_\nu(t)=
\begin{pmatrix}
\widetilde{a}_\nu^{(1)}(t)    &   \nu             &  0    &  \cdots  & 0 &  \nu\\
\nu               &  \widetilde{a}_\nu^{(2)}(t)   &  \nu   & 0  & \cdots  &  0\\
 0        &   \nu       &   \widetilde{a}_\nu^{(3)}(t)   & \ddots &  \ddots  &   \vdots \\
\vdots     & \ddots     & \ddots   &   \ddots  &  \nu  &   0\\
0   & \cdots  & 0  &  \nu &  \widetilde{a}_\nu^{(N-1)}(t) &   \nu\\
\nu   &  0 & \cdots  & 0  &  \nu & \widetilde{a}_\nu^{(N)}(t)
\end{pmatrix}, \ t\in\mathbb{R},
\end{split}
\]
where diagonal entries
$\widetilde{a}_\nu^{(j)}(t)=2O_t^{(j)}-L-2\nu$ for $\ j=1,\dots,N$.
Then by Lemma \ref{diff-inte-ineq},
\[
\begin{split}
\widetilde{\mathbf{x}}(t)\leq \exp\Bigg(\int_{t_0}^t\widetilde{A}%
_\nu(\tau)d\tau\Bigg)\widetilde{\mathbf{x}}(t_0)+\int_{t_0}^t\exp\Bigg(%
\int_u^t\widetilde{A}_\nu(\tau)d\tau\Bigg)\widetilde{\mathbf{f}}(u)du,\quad
t\geq t_0.
\end{split}
\]
Analogous to Lemma \ref{estimation-expon-1}, we have
\[
\begin{split}
\big\|\exp\Bigg(\int_{t_0}^t\widetilde{A}_\nu (\tau )d\tau \Bigg)%
\widetilde{\mathbf{x}}(t_0)\big\|\leq e^{-\frac L2(t-t_0)}\|\widetilde{%
\mathbf{x}}(t_0)\|,\quad -t_0\,,t\geq T_\omega, \quad \nu>0.
\end{split}
\]
Define
\begin{equation}\label{notation-1}
\begin{split}
C_\nu(\omega):=\int_{-\infty}^0\exp\Bigg(\int_u^0\widetilde{A}_\nu(\tau)d\tau%
\Bigg)\widetilde{\mathbf{f}}(t)du,
\end{split}
\end{equation}
\[
\begin{split}
R_\nu^2(\omega)=1+\|C_\nu(\omega)\|^2
\end{split}
\]
and let $B_\nu(\omega)$ be a random ball in $\mathbb{R}^{Nd}$
centered at the origin with radius $R_\nu(\omega)$. Note that the
infinite integral on the right-hand side of \eqref{notation-1} is
well defined by Lemma \ref{property-OU}.

Note that if  $\lim\limits_{t\rightarrow\infty}
e^{-kt}\|\widetilde{\mathbf{x}}(t_0)\|=0$ for all $k>0$, then
\[
\begin{split}
\sum\limits_{j=1}^N\|x^{(j)}(0)\|^2<R_\nu^2(\omega)\quad \text{as}%
\quad t_0\rightarrow-\infty,
\end{split}
\]
which implies that the closed random ball $B_\nu(\omega)$ is a
pullback absorbing set at $t=0$ of $\phi (t,\omega )$, that is, for
any $\omega \in \Omega $ and any $D\in \mathcal{D}$ ($\mathcal{D}$
is a collection of tempered random bounded sets, i.e.
$\lim\limits_{t\rightarrow \infty}e^{-kt}\sup_{u\in D(\theta
_{-t}\omega )}\|u\|=0$), there exists $t_{B_\nu}(\omega )$ such that
\[
\begin{split}
\phi (t,\theta _{-t}\omega )D(\theta _{-t}\omega )\subset
B_\nu(\omega )\quad \text{for all}\quad t\geq t_{B_\nu}(\omega ).
\end{split}
\]
Hence by Theorem 4.1 in \cite{14}, the random
dynamical system $\phi (t,\omega )$ generated by the coupled RODEs \eqref{linear-coupled-rode-again}-\eqref{initial-data} has a random attractor $%
A_\nu(\omega)$ in $B_\nu(\omega)$ for each $\omega$ with the
properties that $A_\nu(\omega)$ is compact, $\phi$-invariant
($\phi(t,\omega)A_\nu(\omega)=A_\nu(\theta_t\omega)$ for all
$t\geq0$ and $\omega\in\Omega$) and attracting in $\mathcal{D}$,
i.e. for all $D\in\mathcal{D}$,
\[
\begin{split}
\lim\limits_{t\rightarrow
+\infty}H^{*}_d\big(\phi(t,\theta_{-t}\omega)D(\theta_{-t}\omega),A_\nu(\omega)\big)=0,\quad
\omega\in\Omega,
\end{split}
\]
where $H^{*}_d$ is the Hausdorff semi-distance on $\mathbb{R}^{Nd}$.
By Lemma \ref{two-solution-converge}, all solutions of
\eqref{linear-coupled-rode-again}-\eqref{initial-data} converge
pathwise to each other, therefore, $A_\nu(\omega)$ consists of
singleton sets, i.e.
\[
\begin{split}
A_\nu(\omega)=\Big(\bar{x}^{(1)}_\nu(\omega),\bar{x}^{(2)}_\nu(\omega),\dots,%
\bar{x}^{(N)}_\nu(\omega)\Big)^{\top}.
\end{split}
\]

When we transfer the coupled RODEs \eqref{linear-coupled-rode-again}
back to the coupled SODEs \eqref{equivalent-sode}, the corresponding
pathwise singleton sets attractor is then
\[
\begin{split}
\Big(\bar{x}^{(1)}_\nu(\theta_t\omega)e^{O_t^{(1)}(\omega)},\bar{x}%
^{(2)}_\nu(\theta_t\omega)e^{O_t^{(2)}(\omega)},\dots,\bar{x}%
^{(N)}_\nu(\theta_t\omega)e^{O_t^{(N)}(\omega)}\Big)^{\top},
\end{split}
\]
which is exactly a stationary stochastic solution of the coupled
SODEs \eqref{equivalent-sode} because the Ornstein-Uhlenbeck process
is stationary.
\end{proof}

\section{Synchronization of Components of
Solutions}\label{syn-comp-solutions}

It is known in section \ref{two-lemmas} that all solutions of the
coupled RODEs \eqref{linear-coupled-rode-again}-\eqref{initial-data}
converge pathwise to each other in the future for a fixed $\nu
(>0)$. Here, we consider what will happen to solutions of the
coupled RODEs \eqref{linear-coupled-rode-again}-\eqref{initial-data}
as the coupling coefficient $\nu$ goes to infinity. First, we prove
a lemma which plays an important role in this section.

\begin{lemma}\label{key-lemma}
For fixed $p\in \mathbb{N}$ and any $\alpha \in (0,2)$, there exist
a $\alpha _0(p)\in (0,2)$ such that the $p\times p$ real symmetric
triple diagonal matrix
\[
\begin{split}
A=%
\begin{pmatrix}
-\alpha    &   1             &  0    &  \cdots  & 0 \\
1     &  -\alpha   &  1     & \ddots  &  \vdots\\
 0        &  1       &  \ddots &  \ddots  &   0 \\
\vdots     & \ddots     & \ddots   &  -\alpha  & 1\\
0   & \cdots  & 0  & 1  &  -\alpha\end{pmatrix}
\end{split}
\]
is negative definite for all $\alpha \geq \alpha _0(p)$.
\end{lemma}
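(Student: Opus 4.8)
Throughout, read the statement as asserting the existence of $\alpha_0(p)\in(0,2)$ for which the displayed matrix $A=A(\alpha)$ is negative definite for every $\alpha\ge\alpha_0(p)$. The plan is to diagonalize $A$ explicitly. Write $A=-\alpha I_p+B$, where $B$ is the $p\times p$ symmetric tridiagonal matrix with zero diagonal and ones on the two off-diagonals. It is classical that $B$ has eigenvalues $2\cos\tfrac{k\pi}{p+1}$, $k=1,\dots,p$, with eigenvectors $v^{(k)}=\bigl(\sin\tfrac{jk\pi}{p+1}\bigr)_{j=1}^{p}$; I would verify this directly by checking $(Bv^{(k)})_j=v^{(k)}_{j-1}+v^{(k)}_{j+1}=2\cos\tfrac{k\pi}{p+1}\,v^{(k)}_j$, using the identity $\sin((j-1)\theta)+\sin((j+1)\theta)=2\cos\theta\sin(j\theta)$ together with the boundary conventions $v^{(k)}_0=\sin 0=0$ and $\sin\bigl((p+1)\tfrac{k\pi}{p+1}\bigr)=0$ to dispose of the first and last rows. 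Consequently the eigenvalues of $A$ are $\lambda_k(\alpha)=-\alpha+2\cos\tfrac{k\pi}{p+1}$, $k=1,\dots,p$.

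Next, since $\cos$ is strictly decreasing on $[0,\pi]$ and $0<\tfrac{\pi}{p+1}<\dots<\tfrac{p\pi}{p+1}<\pi$, the largest eigenvalue of $A$ is $\lambda_1(\alpha)=-\alpha+2\cos\tfrac{\pi}{p+1}$. Hence $A$ is negative definite exactly when $\alpha>2\cos\tfrac{\pi}{p+1}$. Because $\cos\tfrac{\pi}{p+1}<1$ for every $p\in\mathbb{N}$, the threshold $2\cos\tfrac{\pi}{p+1}$ lies strictly below $2$, so I would set, e.g., $\alpha_0(p):=1+\cos\tfrac{\pi}{p+1}$, which satisfies $2\cos\tfrac{\pi}{p+1}<\alpha_0(p)<2$ and $\alpha_0(p)\in(0,2)$; then for every $\alpha\ge\alpha_0(p)$ one gets $\lambda_k(\alpha)\le\lambda_1(\alpha)\le\lambda_1(\alpha_0(p))=-1+\cos\tfrac{\pi}{p+1}<0$, so $A$ is negative definite. (For $p=1$ the matrix is just $(-\alpha)$; the same formula applies since $\cos\tfrac{\pi}{2}=0$, giving $\alpha_0(1)=1$.)

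The one genuinely essential point — and the reason a cruder bound does not suffice — is that the naive estimate $\xi^{\top}A\xi=-\alpha\|\xi\|^2+2\sum_{j=1}^{p-1}\xi_j\xi_{j+1}\le(-\alpha+2)\|\xi\|^2$ only gives negative definiteness for $\alpha>2$, whereas the statement demands a threshold strictly less than $2$. Capturing the true spectral gap $2-2\cos\tfrac{\pi}{p+1}>0$ forces one to use the exact spectrum of the one-dimensional discrete Dirichlet Laplacian (equivalently, to exhibit the sine eigenvectors above), and that is where all the content of the lemma resides; everything else is bookkeeping. A route avoiding the explicit eigenvectors is to observe that conjugating $B$ by $\mathrm{diag}(1,-1,1,-1,\dots)$ turns $B$ into $-B$, so $\mathrm{spec}(B)$ is symmetric about $0$ and the largest eigenvalue of $B$ equals $\|B\|_2$; but one still has to evaluate $\|B\|_2=2\cos\tfrac{\pi}{p+1}$, so this does not really shorten the argument.
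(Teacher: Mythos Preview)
Your proof is correct and reaches the same destination as the paper---both arguments compute the spectrum of $A$ explicitly, obtain eigenvalues $-\alpha+2\cos\frac{k\pi}{p+1}$, and then pick $\alpha_0(p)$ strictly between $2\cos\frac{\pi}{p+1}$ and $2$; your choice $\alpha_0(p)=1+\cos\frac{\pi}{p+1}$ is in fact identical to the paper's $1-\cos\frac{p\pi}{p+1}$. The difference is purely in how the eigenvalues are extracted. The paper works with $\widetilde A=-A$ and computes the characteristic polynomial via the substitution $a+b=\lambda-\alpha$, $ab=1$, reducing the tridiagonal determinant to $\frac{a^{p+1}-b^{p+1}}{a-b}$ and then solving $(a/b)^{p+1}=1$ over the unit circle. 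You instead exhibit the sine eigenvectors $v^{(k)}_j=\sin\frac{jk\pi}{p+1}$ of the off-diagonal part $B$ and verify $Bv^{(k)}=2\cos\frac{k\pi}{p+1}\,v^{(k)}$ directly from the addition formula. Your route is more elementary and self-contained (no complex roots of unity, no case analysis for $a=b$), and it makes the connection to the discrete Dirichlet Laplacian transparent; the paper's route is more algebraic and would generalize more readily to tridiagonal matrices whose determinant recursion has the same structure but whose eigenvectors are less obvious. Either way, the essential content---that the spectral radius of $B$ is $2\cos\frac{\pi}{p+1}<2$---is what drives the lemma, and you identify this clearly.
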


\begin{proof}
Let $\widetilde{A}=-A$. We assert that there exists an
$\alpha_0(p)\in(0,2)$ such that $\widetilde{A}$ is positive definite
for $\alpha\geq\alpha_0(p)$,then $A$ is negative definite for
$\alpha\geq\alpha_0(p)$. In fact, let $a+b=\lambda-\alpha$ and $ab=1$,
we have
\[
\begin{split}
\big|\lambda E-\widetilde{A}\big|&=\begin{vmatrix}
\lambda-\alpha    &   1             &  0    &  \cdots  & 0 \\
1     &  \lambda-\alpha   &  1     & \ddots  &  \vdots\\
 0        &  1       &  \ddots &  \ddots  &   0 \\
\vdots     & \ddots     & \ddots   &  \lambda-\alpha  & 1\\
0   & \cdots  & 0  & 1  &  \lambda-\alpha\end{vmatrix}\\
&=\begin{vmatrix}
a+b    &   ab             &  0    &  \cdots  & 0 \\
1     &  a+b   &  ab     & \ddots  &  \vdots\\
 0        &  1       &  \ddots &  \ddots  &   0 \\
\vdots     & \ddots     & \ddots   &  a+b  & ab\\
0   & \cdots  & 0  & 1  & a+b\end{vmatrix}
=\frac{a^{p+1}-b^{p+1}}{a-b}.\\
\end{split}
\]

If $a\neq b$,  then $\big|\lambda E-\widetilde{A}\big|=0$ implies that
$(\frac{a}{b})^{p+1}=1$ and thus $\frac{a}{b}=e^{i\frac{2k\pi}{p+1}}$ for
$k=1,\dots,p$ ($k\neq0$  since $\frac{a}{b}\neq1$). It follows from
$ab=1$ that
\[
\begin{split}
a_k=\pm\Big(\cos\frac{k\pi}{p+1}+i\sin\frac{k\pi}{p+1}\Big),\quad
b_k=\pm\Big(\cos\frac{k\pi}{p+1}-i\sin\frac{k\pi}{p+1}\Big),
\end{split}
\]
$k=1,\dots,p$. Then
$\lambda=\alpha+(a+b)=\alpha\pm2\cos\frac{k\pi}{p+1},\ k=1,\dots,p$.
Note that $\cos\frac{k\pi}{p+1}=-\cos\frac{(p+1-k)\pi}{p+1}$, thus the
$p$ different eigenvalues of $\widetilde{A}$ are
$\lambda_k=\alpha+2\cos\frac{k\pi}{p+1},\ k=1,\dots,p$.

Otherwise, $a=b$ implies $a=b=1$ or $a=b=-1$, then $\lambda=\alpha+2$ or
$\lambda=\alpha-2$. But $\big|\lambda E-\widetilde{A}\big|\neq0$ for
these two $\lambda$. Hence, all eigenvalues of $\widetilde{A}$ are
\[
\begin{split}
\lambda_k=\alpha+2\cos\frac{k\pi}{p+1},\ k=1,\dots,p.
\end{split}
\]
It follows that for any
$\alpha_0(p)\in\big(-2\cos\frac{p\pi}{p+1},2\big)\subset(0,2)$, for
example, $\alpha_0(p)=1-\cos\frac{p\pi}{p+1}$, $\widetilde{A}$ is
positive definite for $\alpha\geq\alpha_0(p)$.
\end{proof}

We also need the following estimations. Suppose that $\phi
(t)=\big(x_\nu ^{(1)}(t),x_\nu ^{(2)}(t),\dots ,x_\nu
^{(N)}(t)\big)^{\top}$ is a solution of the coupled RODEs
\eqref{linear-coupled-rode-again}-\eqref{initial-data}. For any two
different components $x_\nu ^{(k)}(t)$, $x_\nu ^{(l)}(t)$ of the
solution,
\[
\begin{split}
y_\nu ^{k,l}(t)&=2\Big\langle x_\nu ^{(k)}(t)-\ x_\nu
^{(l)}(t),F^{(k)}(x_\nu ^{(k)}(t),O_t^{(k)})-F^{(l)}(x_\nu
^{(l)}(t),O_t^{(l)})\Big\rangle \\
& =2\Big\langle x_\nu
^{(k)}(t)-x_\nu
^{(l)}(t),e^{-O_t^{(k)}}f^{(k)}(e^{O_t^{(k)}}x^{(k)}(t))-e^{-O_t^{(l)}}f^{(l)}(e^{O_t^{(l)}}x^{(l)}(t))%
\Big\rangle \\
& \quad +2\Big\langle x_\nu ^{(k)}(t)-x_\nu
^{(l)}(t),O_t^{(k)}x^{(k)}(t)-O_t^{(l)}x^{(l)}(t)\Big\rangle \\
& \leq 2\|x_\nu ^{(k)}(t)-x_\nu ^{(l)}(t)\|\Big(e^{-O_t^{(k)}}\|%
f^{(k)}(e^{O_t^{(k)}}x^{(k)}(t))\|+|O_t^{(k)}|\cdot\|x^{(k)}(t)\|\Big) \\
& \quad +2\|x_\nu ^{(k)}(t)-x_\nu ^{(l)}(t)\|\Big(e^{-O_t^{(l)}}%
\|f^{(l)}(e^{O_t^{(l)}}x^{(l)}(t))\|+|O_t^{(l)}|\cdot\|x^{(l)}(t)\|\Big),
\end{split}
\]
thus, for fixed $\beta >0$, we have
\[
\begin{split}
-\beta \nu \|x_\nu ^{(k)}(t)-x_\nu ^{(l)}(t)\|^2+y_\nu ^{k,l}(t)&
\leq \frac 1\nu \Bigg(\frac 4\beta e^{-2O_t^{(k)}}\|%
f^{(k)}(e^{O_t^{(k)}}x^{(k)}(t))\|^2+\frac 4\beta |O_t^{(k)}|^2\|x^{(k)}(t)\|^2\Bigg)\\
& \quad +\frac 1\nu \Bigg(\frac 4\beta e^{-2O_t^{(l)}}\|%
f^{(l)}(e^{O_t^{(l)}}x^{(l)}(t))\|^2+\frac 4\beta
|O_t^{(l)}|^2\|x^{(l)}(t)\|^2\Bigg).
\end{split}
\]
Let
\[
\begin{split}
M_{T_1,T_2}^{k,l,\beta }(\nu ,\omega )& =\sup\limits_{t\in [T_1,T_2]}\Bigg(%
\frac 4\beta e^{-2O_t^{(k)}}\|f^{(k)}(e^{O_t^{(k)}}x^{(k)}(t))\|%
^2+\frac 4\beta |O_t^{(k)}|^2\|x^{(k)}(t)\|^2\Bigg) \\
& \quad +\sup\limits_{t\in [T_1,T_2]}\Bigg(\frac 4\beta e^{-2O_t^{(l)}}\|%
f^{(l)}(e^{O_t^{(l)}}x^{(l)}(t))\|^2+\frac 4\beta
|O_t^{(l)}|^2\|x^{(l)}(t)\|^2\Bigg)
\end{split}
\]
for any bounded interval $[T_1,T_2]$. Note that $C_\nu (\omega )$ in
\eqref{notation-1} satisfies
\[
\frac d{d\nu }\|C_\nu (\omega )\|^2=2\Big\langle C_\nu (\omega
),\frac d{d\nu }C_\nu (\omega )\Big\rangle\leq 0
\]
and consequently, $R_\nu (\omega )\leq R_1(\omega )$ for $\nu \geq
1$. Hence, $M_{T_1,T_2}^{k,l,\beta }(\nu ,\omega )$ is uniformly
bounded in $\nu $ and
\begin{equation}\label{inequality-1}
\begin{split}
-\beta \nu \|x_\nu ^{(k)}(t)-\ x_\nu ^{(l)}(t)\|^2+y_\nu
^{k,l}(t)\leq \frac 1\nu M_{T_1,T_2}^{k,l,\beta }(\omega)
\end{split}
\end{equation}
uniformly for $t\in [T_1,T_2]$ with
\[
M_{T_1,T_2}^{k,l,\beta }(\omega )=\sup\limits_{\nu \geq
1}M_{T_1,T_2}^{k,l,\beta }(\nu ,\omega ).
\]

Now let us estimate the difference between any two components of a
solution  to the coupled RODEs
\eqref{linear-coupled-rode-again}-\eqref{initial-data} as $\nu
\rightarrow \infty$.

\begin{lemma}\label{key-lemma-1}
The difference between any two components of a solution $\big(x_\nu
^{(1)}(t),x_\nu ^{(2)}(t),...,x_\nu ^{(N)}(t)\big)^{\top}$ of the
coupled RODEs \eqref{linear-coupled-rode-again}-\eqref{initial-data}
vanishes uniformly in any bounded time interval as the coupled
coefficient $\nu$ goes to infinity, namely, for any bounded interval
$[T_1,T_2]$ and any $t\in [T_1,T_2]$,
\[
\lim_{\nu \rightarrow \infty }\|x_\nu ^{(j)}(t)-x_\nu ^{(k)}(t)\|=0
\]
for all $j,\ k\in \big\{1,2,\dots,N\big\}$.
\end{lemma}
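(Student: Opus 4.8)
The plan is to control the consecutive differences
\[
v_\nu^{(j)}(t):=x_\nu^{(j)}(t)-x_\nu^{(j+1)}(t),\qquad j=1,\dots,N
\]
(with the cyclic convention $x_\nu^{(N+1)}=x_\nu^{(1)}$). Any pairwise difference $x_\nu^{(j)}(t)-x_\nu^{(k)}(t)$ is a telescoping sum of at most $N-1$ of the $v_\nu^{(i)}(t)$, so by Cauchy--Schwarz it is enough to prove that $S_\nu(t):=\sum_{j=1}^{N}\|v_\nu^{(j)}(t)\|^2\to 0$ as $\nu\to\infty$, uniformly on $[T_1,T_2]$. Subtracting the equation for $x_\nu^{(j+1)}$ from the one for $x_\nu^{(j)}$ in \eqref{linear-coupled-rode-again} yields
\[
\dot v_\nu^{(j)}=\big[F^{(j)}(x_\nu^{(j)},O_t^{(j)})-F^{(j+1)}(x_\nu^{(j+1)},O_t^{(j+1)})\big]+\nu\big(v_\nu^{(j-1)}-2v_\nu^{(j)}+v_\nu^{(j+1)}\big),
\]
and, crucially, $\sum_{j=1}^{N}v_\nu^{(j)}(t)\equiv 0$, so the array $v_\nu^{(1)},\dots,v_\nu^{(N)}$ lies on the zero-sum hyperplane; this is the structural input that replaces the trivial two-system situation of \cite{9}.

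Differentiating $S_\nu$ along the flow, the bracketed nonlinear terms contribute exactly $\sum_{j=1}^{N}y_\nu^{j,j+1}(t)$, the quantity estimated just before the lemma, while the coupling terms contribute, through the identity $\sum_j\langle v^{(j)},v^{(j-1)}-2v^{(j)}+v^{(j+1)}\rangle=-\sum_j\|v^{(j)}-v^{(j+1)}\|^2$, the quantity $-2\nu\sum_{j=1}^{N}\|v_\nu^{(j)}-v_\nu^{(j+1)}\|^2$. Applying \eqref{inequality-1} to each pair $(j,j+1)$ with a parameter $\beta>0$ still free, and summing, gives $\sum_j y_\nu^{j,j+1}(t)\le\beta\nu S_\nu(t)+\frac{1}{\nu}\,C_{T_1,T_2}^{\beta}(\omega)$ uniformly on $[T_1,T_2]$, where $C_{T_1,T_2}^{\beta}(\omega):=\sum_{j=1}^{N}M_{T_1,T_2}^{j,j+1,\beta}(\omega)$. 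Combining this with the discrete Poincar\'e inequality on the $N$-cycle, which by $\sum_j v_\nu^{(j)}=0$ reads $\sum_{j=1}^{N}\|v_\nu^{(j)}-v_\nu^{(j+1)}\|^2\ge\mu_N S_\nu(t)$ with $\mu_N=4\sin^2(\pi/N)>0$ (the first nonzero eigenvalue of the cycle Laplacian), I arrive at
\[
\dot S_\nu(t)\le\nu(\beta-2\mu_N)\,S_\nu(t)+\frac{1}{\nu}\,C_{T_1,T_2}^{\beta}(\omega),\qquad t\in[T_1,T_2].
\]

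Choosing $\beta:=\mu_N$ (a constant depending only on $N$) makes the coefficient $\nu(\beta-2\mu_N)=-\nu\mu_N$ negative; moreover $C_{T_1,T_2}^{\mu_N}(\omega)<\infty$ because, as recorded before the lemma via $R_\nu(\omega)\le R_1(\omega)$ for $\nu\ge 1$, the solution is bounded on $[T_1,T_2]$ uniformly in $\nu\ge 1$. The scalar case of Lemma~\ref{diff-inte-ineq} (Gronwall) then gives
\[
S_\nu(t)\le e^{-\nu\mu_N(t-T_1)}S_\nu(T_1)+\frac{C_{T_1,T_2}^{\mu_N}(\omega)}{\mu_N\,\nu^{2}},\qquad t\in[T_1,T_2].
\]
Since $S_\nu(T_1)$ is bounded uniformly in $\nu$, both terms on the right tend to $0$ as $\nu\to\infty$, so $\|x_\nu^{(j)}(t)-x_\nu^{(k)}(t)\|\le\sqrt{N}\,S_\nu(t)^{1/2}\to 0$ for all $j,k$, which is the assertion. (When $T_1>0$ one may instead run Gronwall from the initial time $t=0$ and replace $S_\nu(T_1)$ by the genuine constant $\sum_j\|x_0^{(j)}-x_0^{(j+1)}\|^2$.)

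The step I expect to be the real obstacle is securing the \emph{strict} dissipativity of the coupling, i.e.\ that the discrete Laplacian is negative-definite --- not merely semidefinite --- on the $(N-1)$-dimensional zero-sum subspace: after \eqref{inequality-1} the nonlinearity only costs a term $\beta\nu S_\nu$ with $\beta$ of our choosing, so everything hinges on a genuine spectral gap $\mu_N>0$ to make the coefficient $\nu(\beta-2\mu_N)$ negative. For $N=2$ this is immediate (a bare $-4\nu\|v\|^2$, as in \cite{9}); for $N\ge 3$ it is exactly the new ingredient, of the kind of negative-definiteness of tridiagonal second-difference operators isolated in Lemma~\ref{key-lemma}. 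A secondary point to watch is the boundary layer: $e^{-\nu\mu_N(t-T_1)}S_\nu(T_1)$ need not vanish at $t=T_1$ itself (the components need not agree at the initial time), so the convergence is uniform only on $[T_1+\delta,T_2]$ for each $\delta>0$, which is all that is needed for the later passage to the averaged RODE.
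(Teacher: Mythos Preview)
Your argument is correct and takes a genuinely different (and cleaner) route from the paper's. The paper does not work with all $N$ cyclic consecutive differences at once; instead it starts from one adjacent pair, say $(x_\nu^{(1)},x_\nu^{(2)})$, and observes that the differential inequality for $\|x_\nu^{(1)}-x_\nu^{(2)}\|^2$ generates a new term $\|x_\nu^{(3)}-x_\nu^{(N)}\|^2$, which in turn generates $\|x_\nu^{(4)}-x_\nu^{(N-1)}\|^2$, and so on. This chain terminates after roughly $N/2$ steps, producing an \emph{open} tridiagonal system (not cyclic), whose coefficient matrix is exactly of the form in Lemma~\ref{key-lemma}; that lemma then supplies the negative definiteness. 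The paper must split into cases $N$ even and $N$ odd, and must in principle repeat the whole chain for each adjacent starting pair. Your approach bypasses all of this: working with the full family $v_\nu^{(j)}=x_\nu^{(j)}-x_\nu^{(j+1)}$ and exploiting the algebraic constraint $\sum_j v_\nu^{(j)}=0$, you replace the tridiagonal eigenvalue computation of Lemma~\ref{key-lemma} by the spectral gap $\mu_N=4\sin^2(\pi/N)$ of the cycle Laplacian on the zero-sum hyperplane, and end up with a single scalar Gronwall inequality rather than a vector one. Both routes rely on the same preparatory estimate \eqref{inequality-1} and on the uniform-in-$\nu$ boundedness of the solution on $[T_1,T_2]$; yours just packages the linear-algebra step more efficiently.

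Your remark about the boundary layer at $t=T_1$ is apt; the paper's proof has the identical feature (the term $e^{(t-t_0)\mathbf{A}_\nu}\mathbf{y}(t_0)$ does not vanish at $t=t_0$) but does not comment on it. As you note, this is harmless for the application in Theorem~\ref{main-result}, where the solutions in question are the attractor trajectories, defined for all $t\in\mathbb{R}$, so one may take $t_0<T_1$.
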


\begin{proof}
Equivalently, we can estimate the difference between any two adjacent components only,
where the first and the last component of the solution are considered to be adjacent.
From now on, we call the difference between two components of the
solution a term. In the following process of estimations, we note
that only one new term will be involved in each step which continues
the process, except the last step that ends the process.

 Let us begin our estimations with $x_\nu^{(1)}(t),\ x_\nu^{(2)}(t)$.
\[
\begin{split}
\frac{d}{dt}\|x_\nu^{(1)}(t)-x_\nu^{(2)}(t)\|^2&=2\Big\langle %
x_\nu^{(1)}(t)-x_\nu^{(2)}(t),F^{(1)}(x_\nu^{(1)}(t),O_t^{(1)})-F^{(2)}(x_%
\nu^{(2)}(t),O_t^{(2)})\Big\rangle \\
& \quad +2\Big\langle x_\nu^{(1)}(t)-x_\nu^{(2)}(t),-3\nu\big(%
x_\nu^{(1)}(t)-x_\nu^{(2)}(t)\big)\Big\rangle \\
& \quad +2\Big\langle x_\nu^{(1)}(t)-x_\nu^{(2)}(t),\nu\big(%
x_\nu^{(N)}(t)-x_\nu^{(3)}(t)\big)\Big\rangle \\
&\leq-5\nu\|x_\nu^{(1)}(t)-x_\nu^{(2)}(t)\|^2+\nu\|%
x_\nu^{(3)}(t)-x_\nu^{(N)}(t)\|^2+y_\nu^{1,2}(t) \\
&\leq-\alpha\nu\|x_\nu^{(1)}(t)-x_\nu^{(2)}(t)\|^2+\nu\|%
x_\nu^{(3)}(t)-x_\nu^{(N)}(t)\|^2+\frac{1}{\nu}M_{T_1,T_2}^{1,2,5-%
\alpha}(\omega)
\end{split}
\]
uniformly for $t\in[T_1,T_2]$ by \eqref{inequality-1}. Here, we take
\[
\alpha=%
\begin{cases}1-\cos\frac{N\pi}{N+2}, & N\  \text{is
even},\\1-\cos\frac{(N-1)\pi}{N+1}, & N\  \text{is odd}. \end{cases}
\]
In fact, we can take any $\alpha\in\big(-2\cos\frac{N\pi}{N+2},2\big)$ when $%
N$ is even and any $\alpha\in\big(-2\cos\frac{(N-1)\pi}{N+1},2\big)$
when $N$ is odd.

Note that the above estimations generate
$x_\nu^{(3)}(t)-x_\nu^{(N)}(t)$.
\[
\begin{split}
\frac{d}{dt}\|x_\nu^{(3)}(t)-x_\nu^{(N)}(t)\|^2&=2\Big\langle %
x_\nu^{(3)}(t)-x_\nu^{(N)}(t),F^{(3)}(x_\nu^{(3)}(t),O_t^{(3)})-F^{(N)}(x_%
\nu^{(N)}(t),O_t^{(N)})\Big\rangle \\
& \quad +2\Big\langle x_\nu^{(3)}(t)-x_\nu^{(N)}(t),-2\nu\big(%
x_\nu^{(3)}(t)-x_\nu^{(N)}(t)\big)\Big\rangle \\
& \quad+2\Big\langle x_\nu^{(3)}(t)-x_\nu^{(N)}(t),\nu\big(%
x_\nu^{(2)}(t)-x_\nu^{(1)}(t)\big)\Big\rangle \\
& \quad +2\Big\langle x_\nu^{(3)}(t)-x_\nu^{(N)}(t),\nu\big(%
x_\nu^{(4)}(t)-x_\nu^{(N-1)}(t)\big)\Big\rangle \\
&\leq-2\nu\|x_\nu^{(3)}(t)-x_\nu^{(N)}(t)\|^2+\nu\|%
x_\nu^{(1)}(t)-x_\nu^{(2)}(t)\|^2 \\
& \quad+\nu\|x_\nu^{(4)}(t)-x_\nu^{(N-1)}(t)\|^2+y_\nu^{3,N}(t) \\
&\leq-\alpha\nu\|x_\nu^{(3)}(t)-x_\nu^{(N)}(t)\|^2+\nu\|%
x_\nu^{(1)}(t)-x_\nu^{(2)}(t)\|^2 \\
& \quad +\nu\|x_\nu^{(4)}(t)-x_\nu^{(N-1)}(t)\|^2+\frac{1}{\nu}%
M_{T_1,T_2}^{3,N,2-\alpha}(\omega)
\end{split}
\]
uniformly for $t\in[T_1,T_2]$.

Note that $x_\nu^{(1)}(t)-x_\nu^{(2)}(t)$ has been estimated and $%
x_\nu^{(4)}(t)-x_\nu^{(N-1)}(t)$ is generated. Similarly, we have
\[
\begin{split}
\frac{d}{dt}\|x_\nu^{(4)}(t)-x_\nu^{(N-1)}(t)\|^2 &\leq -\alpha\nu%
\|x_\nu^{(4)}(t)-x_\nu^{(N-1)}(t)\|^2+\nu\|x_\nu^{(3)}(t)-x_%
\nu^{(N)}(t)\|^2 \\
& \quad +\nu\|x_\nu^{(5)}(t)-x_\nu^{(N-2)}(t)\|^2+\frac{1}{\nu}%
M_{T_1,T_2}^{4,N-1,2-\alpha}(\omega)
\end{split}
\]
uniformly for $t\in[T_1,T_2]$.

Continue such estimations, we obtain
\[
\begin{split}
\frac{d}{dt}\|x_\nu^{(j+3)}(t)-x_\nu^{(N-j)}(t)\|^2&\leq-\alpha\nu%
\|x_\nu^{(j+3)}(t)-x_\nu^{(N-j)}(t)\|^2+\nu\|x_\nu^{(j+2)}(t)-x_%
\nu^{(N-j+1)}(t)\|^2 \\
&\quad +\nu\|x_\nu^{(j+4)}(t)-x_\nu^{(N-j-1)}(t)\|^2+\frac{1}{\nu}%
M_{T_1,T_2}^{j+3,N-j,2-\alpha}(\omega)
\end{split}
\]
uniformly for $t\in[T_1,T_2]$, for $j=2,3,\dots$

Now there exists a question: when and where does this process end?
There are two cases:  $N$ is even and $N$ is odd.

\vspace{2mm} \noindent
\begin{bfseries}
Case 1. $N$ is even
\end{bfseries}

Go on the above process with $j$ increasing. When $j=\frac{N}{2}-3$,
we have
\[
\begin{split}
\frac{d}{dt}\|x_\nu^{(\frac{N}{2})}(t)-x_\nu^{(\frac{N}{2}+3)}(t)\|%
^2&\leq-\alpha\nu\|x_\nu^{(\frac{N}{2})}(t)-x_\nu^{(\frac{N}{2}+3)}(t)%
\|^2+\nu\|x_\nu^{(\frac{N}{2}-1)}(t)-x_\nu^{(\frac{N}{2}+4)}(t)\|%
^2 \\
&\quad +\nu\|x_\nu^{(\frac{N}{2}+1)}(t)-x_\nu^{(\frac{N}{2}+2)}(t)\|%
^2+\frac{1}{\nu}M_{T_1,T_2}^{\frac{N}{2},\frac{N}{2}+3,2-\alpha}(\omega)
\end{split}
\]
uniformly for $t\in[T_1,T_2]$.

As $j$ increases to $\frac{N}{2}-2$, we have
\[
\begin{split}
\frac{d}{dt}\|x_\nu^{(\frac{N}{2}+1)}(t)-x_\nu^{(\frac{N}{2}+2)}(t)\|%
^2&\leq-\alpha\nu\|x_\nu^{(\frac{N}{2}+1)}(t)-x_\nu^{(\frac{N}{2}+2)}(t)%
\|^2+\nu\|x_\nu^{(\frac{N}{2})}(t)-x_\nu^{(\frac{N}{2}+3)}(t)\|^2
\\
&\quad +\frac{1}{\nu}M_{T_1,T_2}^{\frac{N}{2}+1,\frac{N}{2}%
+2,5-\alpha}(\omega)
\end{split}
\]
uniformly for $t\in[T_1,T_2]$, which ends this process.

For ease of notation, we rewrite the above inequalities in the matrix form,
\begin{equation}\label{diff-ineq-simple-form-1}
\begin{split}
\dot{\mathbf{y}}(t)\leq \mathbf{A}_\nu\mathbf{y}(t)+\frac{1}{\nu}\mathbf{M}
\end{split}
\end{equation}
uniformly for $t\in[T_1,T_2]$ with two $\frac{N}{2}$-dimensional
vectors
\[
\begin{split}
\mathbf{y}(t)=\Big(\|x_\nu^{(1)}(t)-x_\nu^{(2)}(t)\|^2,\|%
x_\nu^{(3)}(t)-x_\nu^{(N)}(t)\|^2,\dots,\|x_\nu^{(\frac{N}{2}%
+1)}(t)-x_\nu^{(\frac{N}{2}+2)}(t)\|^2\Big)^{\top}, \quad
t\in\mathbb{R},
\end{split}
\]
\[
\begin{split}
\mathbf{M}=\Big(M_{T_1,T_2}^{1,2,5-\alpha}(\omega),M_{T_1,T_2}^{3,N,2-%
\alpha}(\omega),\dots,M_{T_1,T_2}^{\frac{N}{2},\frac{N}{2}%
+3,2-\alpha}(\omega),M_{T_1,T_2}^{\frac{N}{2}+1,\frac{N}{2}%
+2,5-\alpha}(\omega)\Big)^{\top}
\end{split}
\]
and a $\frac{N}{2}\times\frac{N}{2}$ matrix
\[
\begin{split}
\mathbf{A}_\nu=
\begin{pmatrix}
-\alpha\nu    &   \nu             &  0    &  \cdots  & 0 \\
\nu     &  -\alpha\nu   &  \nu     & \ddots  &  \vdots\\
 0        &  \nu       &  \ddots &  \ddots  &   0 \\
\vdots     & \ddots     & \ddots   &  -\alpha\nu  & \nu\\
0   & \cdots  & 0  & \nu  &  -\alpha\nu\end{pmatrix}.
\end{split}
\]

By Lemma \ref{diff-inte-ineq}, it follows from
\eqref{diff-ineq-simple-form-1} that
\begin{equation}\label{inte-ineq-simple-form-1}
\begin{split}
\mathbf{y}(t)\leq e^{(t-t_0)\mathbf{A}_\nu}\mathbf{y}(t_0)+\frac{1}{\nu}%
\int_{t_0}^te^{(t-u)\mathbf{A}_\nu}\mathbf{M}du.
\end{split}
\end{equation}
By Lemma \ref{key-lemma}, $\frac{1}{\nu}\mathbf{A}_\nu$ is negative
definite, then similar to Lemma \ref{estimation-expon-1},
\[
\|e^{\nu(t-t_0)\mathbf{A}_\nu}\mathbf{y}(t_0)\|\leq
e^{\nu(t-t_0)\lambda_{\max}}\|\mathbf{y}(t_0)\|,
\]
where $\lambda_{\max}=-\alpha-2\cos\frac{N\pi}{N+2}<0$ is the
maximal eigenvalue of $\frac{1}{\nu}\mathbf{A}_\nu$. Thus, it
follows from \eqref{inte-ineq-simple-form-1} that
\[
\mathbf{y}(t)\rightarrow0\quad \text{as} \quad \nu\rightarrow\infty
\]
uniformly for $t\in[T_1,T_2]$, which implies that $\|x_\nu^{(1)}(t)-x_%
\nu^{(2)}(t)\|^2$ and $\|x_\nu^{(\frac{N}{2}+1)}(t)-x_\nu^{(\frac{N}{%
2}+2)}(t)\|^2$ tend to $0$ uniformly for $t\in[T_1,T_2]$ as $%
\nu\rightarrow\infty$.

\vspace{2mm}\noindent
\begin{bfseries}
Case 2. $N$ is odd
\end{bfseries}

Similarly, when $j=\frac{N-1}{2}-3$, we have
\[
\begin{split}
\frac{d}{dt}\|x_\nu^{(\frac{N-1}{2})}(t)-x_\nu^{(\frac{N+1}{2}+3)}(t)%
\|^2&\leq-\alpha\nu\|x_\nu^{(\frac{N-1}{2})}(t)-x_\nu^{(\frac{N+1}{2}%
+3)}(t)\|^2+\nu\|x_\nu^{(\frac{N-1}{2}-1)}(t)-x_\nu^{(\frac{N+1}{2}%
+4)}(t)\|^2 \\
&\quad +\nu\|x_\nu^{(\frac{N+1}{2})}(t)-x_\nu^{(\frac{N+1}{2}+2)}(t)%
\|^2+\frac{1}{\nu}M_{T_1,T_2}^{\frac{N-1}{2},\frac{N+1}{2}%
+3,2-\alpha}(\omega)
\end{split}
\]
uniformly for $t\in[T_1,T_2]$.

As $j$ increases to $\frac{N+1}{2}-3$, we have
\[
\begin{split}
\frac{d}{dt}\|x_\nu^{(\frac{N+1}{2})}(t)-x_\nu^{(\frac{N+1}{2}+2)}(t)%
\|^2&\leq-\alpha\nu\|x_\nu^{(\frac{N+1}{2})}(t)-x_\nu^{(\frac{N+1}{2}%
+2)}(t)\|^2+\nu\|x_\nu^{(\frac{N-1}{2})}(t)-x_\nu^{(\frac{N+1}{2}%
+3)}(t)\|^2 \\
&\quad +\frac{1}{\nu}M_{T_1,T_2}^{\frac{N+1}{2},\frac{N+1}{2}%
+2,5-\alpha}(\omega)
\end{split}
\]
uniformly for $t\in[T_1,T_2]$, which ends this process.

We can also rewrite above inequalities in the matrix form
\begin{equation}\label{diff-ineq-simple-form-2}
\begin{split}
\dot{\widetilde{\mathbf{y}}}(t)\leq \widetilde{\mathbf{A}}_\nu\widetilde{%
\mathbf{y}}(t)+\frac{1}{\nu}\widetilde{\mathbf{M}}
\end{split}
\end{equation}
uniformly for $t\in[T_1,T_2]$ with two $\frac{N-1}{2}$-dimensional
vectors
\[
\begin{split}
\widetilde{\mathbf{y}}(t)=\Big(\|x_\nu^{(1)}(t)-x_\nu^{(2)}(t)\|^2,%
\|x_\nu^{(3)}(t)-x_\nu^{(N)}(t)\|^2,\dots,\|x_\nu^{(\frac{N+1}{2}%
)}(t)-x_\nu^{(\frac{N+1}{2}+2)}(t)\|^2\Big)^{\top},\quad
t\in\mathbb{R},
\end{split}
\]
\[
\begin{split}
\widetilde{\mathbf{M}}=\Big(M_{T_1,T_2}^{1,2,5-\alpha}(%
\omega),M_{T_1,T_2}^{3,N,2-\alpha}(\omega),\dots,M_{T_1,T_2}^{\frac{N-1}{2},%
\frac{N+1}{2}+3,2-\alpha}(\omega),M_{T_1,T_2}^{\frac{N+1}{2},\frac{N+1}{2}%
+2,5-\alpha}(\omega)\Big)^{\top}
\end{split}
\]
and a $\frac{N-1}{2}\times\frac{N-1}{2}$ matrix
\[
\begin{split}
\widetilde{\mathbf{A}}_\nu=
\begin{pmatrix}
-\alpha\nu    &   \nu             &  0    &  \cdots  & 0 \\
\nu     &  -\alpha\nu   &  \nu     & \ddots  &  \vdots\\
 0        &  \nu       &  \ddots &  \ddots  &   0 \\
\vdots     & \ddots     & \ddots   &  -\alpha\nu  & \nu\\
0   & \cdots  & 0  & \nu  &  -\alpha\nu\end{pmatrix}.
\end{split}
\]

By Lemma \ref{diff-inte-ineq}, it follows from
\eqref{diff-ineq-simple-form-2} that
\begin{equation}\label{inte-ineq-simple-form-2}
\begin{split}
\widetilde{\mathbf{y}}(t)\leq e^{(t-t_0)\widetilde{\mathbf{A}}_\nu}%
\widetilde{\mathbf{y}}(t_0)+\frac{1}{\nu}\int_{t_0}^te^{(t-u)\widetilde{%
\mathbf{A}}_\nu}\widetilde{\mathbf{M}}du.
\end{split}
\end{equation}
Similar to the case that $N$ is even, it follows from \eqref{inte-ineq-simple-form-2} that $\|%
x_\nu^{(1)}(t)-x_\nu^{(2)}(t)\|^2$ tends to $0$ uniformly for
$t\in[T_1,T_2]$ as $\nu\rightarrow\infty$.

For other adjacent components, the above process can be duplicated. Hence,
after we have dealt with any adjacent components, we can conclude
that the difference between any two components of a solution of
the coupled RODEs (3.1) goes to $0$ uniformly for $t%
\in[T_1,T_2]$ as $\nu\rightarrow\infty$. In fact, if $N$ is even,
another adjacent component will be involved while we focus on current
adjacent
components. For example, $x_\nu^{(\frac{N}{2}+1)}(t)-x_\nu^{(\frac{N}{2}%
+2)}(t)$ is involved while we duplicate with
$x_\nu^{(1)}(t)-x_\nu^{(2)}(t)$. So the above process can be done
for only $\frac{N}{2}$ times if $N$ is even.
\end{proof}

\begin{remark}
In the case of $N=3$, the proof of Lemma \ref{key-lemma-1} can be
simplified since each term is only related to itself.
\end{remark}

Lemma \ref{key-lemma-1} implies that all components of a solution of
\eqref{linear-coupled-rode-again}-\eqref{initial-data} tend to the
same limit uniformly for $t\in [T_1,T_2]$ as $\nu\rightarrow\infty$.
Now, we find what they converge to.

Consider the averaged RODE \eqref{average-rode}
\begin{equation}\label{average-rode-again}
\begin{split}
\frac{dz}{dt}=\frac{1}{N}\sum%
\limits_{j=1}^{N}e^{-O_t^{(j)}}f^{(j)}(e^{O_t^{(j)}}z)+\frac{1}{N}%
\sum\limits_{j=1}^{N}O_t^{(j)}z.
\end{split}
\end{equation}

\begin{lemma}
The random dynamical system $\varphi(t,\omega)$ generated by the
solution of RODE \eqref{average-rode-again} has a singleton sets
random attractor denoted by $\{\bar z(\omega )\}$. Furthermore,
\[
\bar z(\theta _t\omega )\exp \Big(
\frac 1N\sum\limits_{j=1}^NO_t^{(j)}(\omega )\Big)
\]
is the stationary stochastic solution of equivalently averaged SODE
\begin{equation}\label{equivalent-average-sode}
\begin{split}
dZ_t=\frac 1N\sum\limits_{j=1}^Ne^{-\zeta _t^{(j)}}f^{(j)}(e^{\zeta
_t^{(j)}}Z_t)dt+\frac 1N\sum\limits_{i=1}^m\Big(\sum\limits_{j=1}^Nc_i^{(j)}%
\Big)Z_t\circ dW_t^{(i)},
\end{split}
\end{equation}
where $\zeta _t^{(j)}=\frac 1N\sum\limits_{k=1}^N\big(O_t^{(j)}-O_t^{(k)}%
\big),\ j=1,\dots ,N$.
\end{lemma}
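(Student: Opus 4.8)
\emph{Proof proposal.} The plan is to rerun, now for the single $\mathbb{R}^d$-valued equation \eqref{average-rode-again}, the three-part argument (pathwise convergence of any two solutions; existence of a tempered pullback absorbing ball; transfer to the SODE) already carried out for the coupled system in Section \ref{syn-two-solutions}, since \eqref{average-rode-again} has exactly the same structure --- an average of conjugated one-sided dissipative vector fields plus a linear term with the Ornstein--Uhlenbeck coefficient $\bar O_t:=\frac1N\sum_{j=1}^N O_t^{(j)}$, which by Lemma \ref{property-OU} inherits the sublinear-growth and exponential estimates of each $O_t^{(j)}$. First I would show any two solutions $z_1,z_2$ of \eqref{average-rode-again} converge pathwise: computing $\frac{d}{dt}\|z_1(t)-z_2(t)\|^2$ and applying \eqref{dissipative-condition} with arguments $e^{O_t^{(j)}}z_1,e^{O_t^{(j)}}z_2$ to the $j$-th summand, each cross term is $\le -L\|z_1-z_2\|^2$, so
\[
\frac{d}{dt}\|z_1(t)-z_2(t)\|^2\le\Big(\tfrac2N\textstyle\sum_{j=1}^N O_t^{(j)}-2L\Big)\|z_1(t)-z_2(t)\|^2 ,
\]
and Gronwall's inequality together with the estimate $\int_0^t O_\tau^{(j)}\,d\tau\le\frac L4 t$ for $t>T_\omega$ from the proof of Lemma \ref{property-OU} gives $\|z_1(t)-z_2(t)\|^2\le e^{-3Lt/2}\|z_1(0)-z_2(0)\|^2\to0$.

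Next, for a single solution $z$ I would estimate $\frac{d}{dt}\|z(t)\|^2$, writing $f^{(j)}(e^{O_t^{(j)}}z)=\big[f^{(j)}(e^{O_t^{(j)}}z)-f^{(j)}(0)\big]+f^{(j)}(0)$, bounding the dissipative part of the $j$-th summand by $-2L\|z\|^2$ and the remainder by Young's inequality, to get
\[
\frac{d}{dt}\|z(t)\|^2\le\Big(\tfrac2N\textstyle\sum_{j=1}^N O_t^{(j)}-L\Big)\|z(t)\|^2+\tfrac1{NL}\textstyle\sum_{j=1}^N e^{-2O_t^{(j)}}\|f^{(j)}(0)\|^2 .
\]
Lemma \ref{diff-inte-ineq} (in dimension one) together with Lemma \ref{property-OU}, exactly as in the proof of Lemma \ref{estimation-expon-1} and of the Theorem in Section \ref{syn-two-solutions}, makes the homogeneous part decay like $e^{-L(t-t_0)/2}$ for $-t_0,t>T_\omega$ and makes the forcing term produce a convergent improper integral $c(\omega):=\int_{-\infty}^0\exp\!\big(\int_u^0(\tfrac2N\sum_j O_\tau^{(j)}-L)\,d\tau\big)\tfrac1{NL}\sum_j e^{-2O_u^{(j)}}\|f^{(j)}(0)\|^2\,du$, well defined by Lemma \ref{property-OU}. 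Hence the ball in $\mathbb{R}^d$ centred at the origin of radius $(1+c(\omega))^{1/2}$ is a pullback absorbing set in $\mathcal D$, and Theorem 4.1 in \cite{14} yields a random attractor $\{\bar z(\omega)\}$, which is a singleton by the preceding paragraph.

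Finally I would transfer back to \eqref{equivalent-average-sode}. Since $\bar O_t=\frac1N\sum_j O_t^{(j)}$ solves $d\bar O_t=-\bar O_t\,dt+\sum_{i=1}^m\big(\frac1N\sum_j c_i^{(j)}\big)dW_t^{(i)}$, set $Z_t:=e^{\bar O_t}z(t)$; the Stratonovich chain rule $d(e^{\bar O_t})=e^{\bar O_t}\circ d\bar O_t$ gives $dZ_t=e^{\bar O_t}\big(\dot z(t)-\bar O_t z(t)\big)\,dt+Z_t\sum_{i=1}^m\big(\frac1N\sum_j c_i^{(j)}\big)\circ dW_t^{(i)}$, and substituting \eqref{average-rode-again} the linear-in-$z$ drift cancels (because $\frac1N\sum_j O_t^{(j)}z=\bar O_t z$) while $e^{\bar O_t}e^{-O_t^{(j)}}=e^{-\zeta_t^{(j)}}$ and $e^{O_t^{(j)}}z=e^{\zeta_t^{(j)}}Z_t$, so $Z_t$ solves \eqref{equivalent-average-sode}. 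Consequently the singleton attractor determines $Z_t=\bar z(\theta_t\omega)\exp\!\big(\frac1N\sum_j O_t^{(j)}(\omega)\big)$, which is a stationary stochastic solution of \eqref{equivalent-average-sode} because the Ornstein--Uhlenbeck process is stationary and the attractor is $\varphi$-invariant.

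None of these steps is genuinely difficult --- they are $\mathbb{R}^d$-valued reruns of Section \ref{syn-two-solutions} with the matrix $A_\nu$ replaced by the scalar $\frac2N\sum_j O_t^{(j)}-2L$ (respectively $-L$), so there is neither an eigenvalue estimate nor any need for Lemma \ref{key-lemma}. The one place that requires care is the transfer step: checking that the averaged field $\frac1N\sum_j e^{-O_t^{(j)}}f^{(j)}(e^{O_t^{(j)}}\cdot)$ is precisely the RODE conjugate of the averaged SODE drift $\frac1N\sum_j e^{-\zeta_t^{(j)}}f^{(j)}(e^{\zeta_t^{(j)}}\cdot)$ under $e^{\bar O_t}$, and that the spurious linear term cancels --- the bookkeeping with the shifts $\zeta_t^{(j)}=O_t^{(j)}-\bar O_t$ is where an error would most easily slip in.
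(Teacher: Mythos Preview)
Your proposal is correct and follows essentially the same route as the paper: the same differential inequalities for $\|z_1-z_2\|^2$ and $\|z\|^2$, Gronwall combined with Lemma \ref{property-OU} to get pathwise convergence and a tempered pullback absorbing ball, then Theorem 4.1 in \cite{14} for the singleton attractor, and finally the conjugacy $Z_t=e^{\bar O_t}z(t)$ to recover the stationary solution of \eqref{equivalent-average-sode}. If anything you give more detail on the transfer step (the Stratonovich computation with $\zeta_t^{(j)}=O_t^{(j)}-\bar O_t$) than the paper does, which simply states the transformation without verifying the drift cancellation.
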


\begin{proof}Suppose that $z_1(t)$, $z_2(t)$ are two solutions of \eqref{average-rode-again}. We have
\[
\begin{split}
\frac{d}{dt}\|z_1(t)-z_2(t)\|^2\leq\Big(-2L+\frac{2}{N}%
\sum\limits_{j=1}^{N}O_t^{(j)}\Big)\|z_1(t)-z_2(t)\|^2.
\end{split}
\]
It follows from Gronwall's Lemma that
\[
\begin{split}
\|z_1(t)-z_2(t)\|^2\leq\exp\Bigg(-2t\Bigg(L-\frac{1}{N}\sum\limits_{j=1}^{N}%
\frac{1}{t}\int_{0}^{t}O_t^{j}d\tau\Bigg)\Bigg)\|z_1(0)-z_2(0)\|^2.
\end{split}
\]
Hence, by Lemma \ref{property-OU}, we have
\[
\begin{split}
\lim\limits_{t\rightarrow\infty}\|z_1(t)-z_2(t)\|^2=0,
\end{split}
\]
which means all solutions of \eqref{average-rode-again} converge
pathwise to each other.

Now we use the theory of random dynamical systems to see what they
converge to. Suppose $z(t)$ is a solution of (4.6), we have
\[
\begin{split}
\frac{d}{dt}\big\|z(t)\big\|^2\leq\Big(-L+\frac{2}{N}\sum%
\limits_{j=1}^{N}O_t^{(j)}\Big)\big\|z(t)\big\|^2+\frac{1}{N}%
\sum\limits_{j=1}^{N}\frac{e^{-2O_t^{(j)}}}{L}\big\|f^{(j)}(0)\big\|^2.
\end{split}
\]
It follows from Gronwall's Lemma that
\[
\begin{split}
\|z(t)\|^2 \leq e^{-L(t-t_0)+\frac{2}{N}\sum\limits_{j=1}^{N}%
\int_{t_0}^{t}O_{\tau}^{(j)}d\tau}\|z(t_0)\|^2 +\frac{1}{N}\sum\limits_{j=1}^{N}\frac{\|f^{(j)}(0)\|^2}{L}%
\int_{t_0}^{t}e^{-2O_{u}^{(j)}}e^{-L(t-u)+\frac{2}{N}\sum\limits_{k=1}^{N}%
\int_{u}^{t}O_{\tau}^{(k)}d\tau }du.
\end{split}
\]
Thus, by Lemma \ref{property-OU}, we obtain
\[
\begin{split}
\|z(t)\|^2\leq e^{-\frac{L}{2}(t-t_0)}\|z(t_0)\|^2 +\frac{1}{%
N}\sum\limits_{j=1}^{N}\frac{\|f^{(j)}(0)\|^2}{L}%
\int_{t_0}^{t}e^{-2O_{u}^{(j)}}e^{-L(t-u)+\frac{2}{N}\sum\limits_{k=1}^{N}%
\int_{u}^{t}O_{\tau}^{(k)}d\tau }du
\end{split}
\]
for $-t_0,t>T_{\omega}$.

By pathwise pullback convergence with $t_0\rightarrow-\infty$, the random
closed ball centered at the origin with random radius
$R(\omega)$ is a pullback absorbing set of $\varphi(t,\omega)$ in
$\mathcal{D}$ for $t>T_{\omega}$, where
\[
\begin{split}
R^2(\omega)=1+\frac{1}{N}\sum\limits_{j=1}^{N}\frac{\|f^{(j)}(0)\|^2%
}{L}\int_{-\infty}^{0}e^{Lu-2O_{u}^{(j)}}e^{\frac{2}{N}\sum\limits_{k=1}^{N}%
\int_{u}^{0}O_{\tau}^{(k)}d\tau }du.
\end{split}
\]
Note that the integrals on the right-hand side are well defined by
Lemma \ref{property-OU}.

By Theorem 4.1 in [9], there exists a random attractor
$\{\bar{z}(\omega)\}$ for $\varphi(t,\omega)$. Since all solutions
of \eqref{average-rode-again} converge pathwise to each other, the
random attractor $\{\bar{z}(\omega)\}$ are composed of singleton
sets.

Note that the averaged RODE \eqref{average-rode-again} is
transformed from the averaged SODE \eqref{equivalent-average-sode}
by the transformation
\[
\begin{split}
z(t,\omega)=\exp\Big(-\frac{1}{N}\sum\limits_{j=1}^{N}O_t^{(j)}(\omega)\Big)Z_t(%
\omega),
\end{split}
\]
so the pathwise singleton sets attractor $\bar{z}(\theta_t\omega)\exp\Big(%
\frac{1}{N}\sum\limits_{j=1}^{N}O_t^{(j)}(\omega)\Big)$ is a
stationary solution of the averaged SODE
\eqref{equivalent-average-sode} since the Ornstein-Uhlenbeck process
is stationary.
\end{proof}

We now show another main result of this paper.

\begin{theorem}\label{main-result}
Let
\[
\begin{split}
\Big(\bar x_{\nu _n}^{(1)}(t,\omega ),\bar x_{\nu _n}^{(2)}(t,\omega
),\dots ,\bar x_{\nu _n}^{(N)}(t,\omega )\Big)^{\top}=\Big(\bar
x_{\nu _n}^{(1)}(\theta _t\omega ),\bar x_{\nu _n}^{(2)}(\theta
_t\omega ),\dots ,\bar x_{\nu _n}^{(N)}(\theta _t\omega
)\Big)^{\top}
\end{split}
\]
be the singleton sets random attractor of the random dynamical
system $\phi(t,\omega)$ generated by the solution of RODEs
\eqref{linear-coupled-rode-again}-\eqref{initial-data}, then
\[
\begin{split}
\Big(\bar x_{\nu _n}^{(1)}(t,\omega ),\bar x_{\nu _n}^{(2)}(t,\omega
),\dots ,\bar x_{\nu _n}^{(N)}(t,\omega )\Big)^{\top}\rightarrow
\Big(\bar z(t,\omega ),\bar z(t,\omega ),\dots ,\bar z(t,\omega
)\Big)^{\top}
\end{split}
\]
pathwise uniformly for $t\in [T_1,T_2]$ for any sequence $\nu
_n\rightarrow \infty $, where $\bar z(t,\omega )=\bar z(\theta
_t\omega )$ solves the averaged RODE \eqref{average-rode-again} and
$\bar z(\omega)$ is the singleton sets random attractor of the
random dynamical system $\varphi(t,\omega)$ generated by the
solution of the averaged RODE \eqref{average-rode-again}.
\end{theorem}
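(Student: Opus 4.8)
The plan is to reduce the claim to the behaviour of the \emph{average} of the components and then to compare that average with the attractor $\bar z$ of the averaged RODE. Fix $\omega$, a bounded interval $[T_1,T_2]$, and a sequence $\nu_n\to\infty$. Write $\bar x_{\nu_n}^{(j)}(t,\omega)=\bar x_{\nu_n}^{(j)}(\theta_t\omega)$ for the trajectories cut out by the singleton attractor $\{A_{\nu_n}(\omega)\}$ obtained in Section~\ref{syn-two-solutions}; by $\phi$-invariance of this attractor and the global-in-time solvability of \eqref{linear-coupled-rode-again}, each tuple $\bar\phi_n(t)=(\bar x_{\nu_n}^{(1)}(t,\omega),\dots,\bar x_{\nu_n}^{(N)}(t,\omega))^{\top}$ is a solution of the coupled RODEs defined for all $t\in\mathbb R$, and since $A_{\nu_n}(\theta_t\omega)\subset B_{\nu_n}(\theta_t\omega)$ with radius $R_{\nu_n}(\theta_t\omega)\le R_1(\theta_t\omega)$ for $\nu_n\ge 1$, we obtain $\sup_n\sup_{t\in[T_0,T_2]}\|\bar x_{\nu_n}^{(j)}(t,\omega)\|<\infty$ for every $T_0$ (the map $t\mapsto R_1(\theta_t\omega)$ being finite on compacts). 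This uniform bound is precisely what makes the constants $M_{T_0,T_2}^{k,l,\beta}(\omega)$ finite along these particular trajectories, so Lemma~\ref{key-lemma-1} applies to $\bar\phi_n$ on $[T_0,T_2]$ and gives, with $\bar y_n(t):=\frac1N\sum_{j=1}^N\bar x_{\nu_n}^{(j)}(t,\omega)$ and $\delta_n^{(j)}(t):=\bar x_{\nu_n}^{(j)}(t,\omega)-\bar y_n(t)$,
\[
\varepsilon_n:=\sup_{1\le j\le N}\ \sup_{t\in[T_0,T_2]}\|\delta_n^{(j)}(t)\|\longrightarrow 0\qquad(n\to\infty).
\]
Since $\|\bar x_{\nu_n}^{(j)}(t,\omega)-\bar z(t,\omega)\|\le\|\delta_n^{(j)}(t)\|+\|\bar y_n(t)-\bar z(t,\omega)\|$, it suffices to prove $\sup_{t\in[T_1,T_2]}\|\bar y_n(t)-\bar z(t,\omega)\|\to 0$.

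Next I would derive the equation for the average. Summing \eqref{linear-coupled-rode-again} over $j$, the discrete Laplacian on the cycle telescopes to zero, so $\dot{\bar y}_n(t)=\frac1N\sum_{j=1}^N F^{(j)}(\bar x_{\nu_n}^{(j)}(t,\omega),O_t^{(j)})$, which is the right-hand side of \eqref{average-rode-again} evaluated at the individual components rather than at a common point. Setting $e_n(t):=\bar y_n(t)-\bar z(t,\omega)$ and using $\bar x_{\nu_n}^{(j)}-\bar z=e_n+\delta_n^{(j)}$, I would expand $\frac{d}{dt}\|e_n\|^2=2\langle e_n,\dot{\bar y}_n-\dot{\bar z}\rangle$ and split each summand into its $f^{(j)}$-part and its $O_t^{(j)}$-part. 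In the $f^{(j)}$-part, after replacing $e_n$ by $(\bar x_{\nu_n}^{(j)}-\bar z)-\delta_n^{(j)}$ and rescaling the inner product by $e^{O_t^{(j)}}$, the one-sided dissipative Lipschitz condition \eqref{dissipative-condition} produces $-\frac{2L}{N}\sum_j\|\bar x_{\nu_n}^{(j)}-\bar z\|^2\le -L\|e_n\|^2+2L\varepsilon_n^2$, while the leftover $\delta_n^{(j)}$-terms are controlled by continuity (local Lipschitzness) of $f^{(j)}$ together with the uniform bounds on $\bar x_{\nu_n}^{(j)}$ and $\bar z$ over $[T_0,T_2]$. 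The $O_t^{(j)}$-part contributes the coefficient $\frac2N\sum_j O_t^{(j)}$ on $\|e_n\|^2$ plus cross terms $\frac2N\sum_j O_t^{(j)}\langle e_n,\delta_n^{(j)}\rangle$ which, since each $O^{(j)}_{\,\cdot}$ is bounded on the compact $[T_0,T_2]$, are dominated by an $o(1)$-multiple of $\|e_n\|^2$ plus an $O(\varepsilon_n)$ term. The conclusion is a scalar differential inequality
\[
\frac{d}{dt}\|e_n(t)\|^2\le\Big(-L+\tfrac2N\sum_{j=1}^N O_t^{(j)}+c_n(t)\Big)\|e_n(t)\|^2+g_n(t),\qquad t\in[T_0,T_2],
\]
with $c_n\to 0$ and $g_n\to 0$ uniformly on $[T_0,T_2]$.

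The last, and genuinely delicate, point is that there is no a priori bound on $\|e_n(T_1)\|$, so a forward Gronwall estimate from $T_1$ is useless; the remedy is to pull back from the distant past, exploiting the fact that both $\bar x_{\nu_n}^{(j)}(\cdot,\omega)$ and $\bar z(\cdot,\omega)$ are bi-infinite trajectories. Applying Lemma~\ref{diff-inte-ineq} (scalar case) on $[T_0,t]$ yields
\[
\|e_n(t)\|^2\le\exp\!\Big(\int_{T_0}^t\!\big(-L+\tfrac2N\textstyle\sum_j O_s^{(j)}+c_n(s)\big)ds\Big)\|e_n(T_0)\|^2+\int_{T_0}^t\exp\!\Big(\int_u^t\!\big(-L+\tfrac2N\sum_j O_s^{(j)}+c_n(s)\big)ds\Big)g_n(u)\,du .
\]
By Lemma~\ref{property-OU}, $\int_u^t\big(-L+\frac2N\sum_j O_s^{(j)}\big)ds\le-\frac L2(t-u)$ once $-u,t>T_\omega$, and on the remaining compact range the integrand is bounded crudely; with $|c_n|\to 0$ this gives $\exp(\int_u^t(\cdots)ds)\le C(\omega,T_0,T_1,T_2)\,e^{-\frac L2(t-u)}$ uniformly for $t\in[T_1,T_2]$ and $n$ large. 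Since $\|e_n(T_0)\|^2\le\big(R_1(\theta_{T_0}\omega)+R(\theta_{T_0}\omega)\big)^2$ with $R_1,R$ the \emph{tempered} absorbing-ball radii of $\phi$ and $\varphi$, the first term is $\le C\,e^{-\frac L2(T_1-T_0)}\big(R_1(\theta_{T_0}\omega)+R(\theta_{T_0}\omega)\big)^2$, which tends to $0$ as $T_0\to-\infty$ by temperedness, uniformly in $t\ge T_1$ and in $n$; the second term is $\le C'\sup_{[T_0,T_2]}|g_n|\to 0$ as $n\to\infty$ for each fixed $T_0$. Hence $\limsup_{n\to\infty}\sup_{t\in[T_1,T_2]}\|e_n(t)\|^2\le C\,e^{-\frac L2(T_1-T_0)}\big(R_1(\theta_{T_0}\omega)+R(\theta_{T_0}\omega)\big)^2$ for every admissible $T_0$, and letting $T_0\to-\infty$ forces this $\limsup$ to vanish; combined with $\varepsilon_n\to 0$ this gives the theorem. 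The main obstacle is exactly this lack of endpoint control, circumvented by the pullback-from-$-\infty$ device together with temperedness of the absorbing balls; a secondary technical nuisance is keeping the integrating-factor estimate honest on the non-asymptotic portion of $[T_0,t]$ and verifying that all $\delta_n^{(j)}$-error terms are genuinely $o(1)$ uniformly on $[T_0,T_2]$.
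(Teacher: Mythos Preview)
Your approach is correct and takes a genuinely different route from the paper's. Both arguments begin the same way: form the average $\bar z_\nu(t,\omega)=\frac1N\sum_j\bar x_\nu^{(j)}(t,\omega)$, note that the cyclic coupling cancels so that $\dot{\bar z}_\nu=\frac1N\sum_j F^{(j)}(\bar x_\nu^{(j)},O_t^{(j)})$, and use $R_\nu\le R_1$ together with Lemma~\ref{key-lemma-1} to control the inter-component discrepancies. From that point the paper proceeds by \emph{compactness}: it bounds $\|\dot{\bar z}_\nu\|$ uniformly in $\nu$ on $[T_1,T_2]$, extracts via Ascoli--Arzel\`a a uniformly convergent subsequence $\bar z_{\nu_{n_k}}\to\bar z$, passes to the limit in the integral form to see that $\bar z$ solves \eqref{average-rode-again}, and then invokes the singleton-attractor property of \eqref{average-rode-again} to conclude that every subsequential limit is the same, hence the full sequence converges. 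No endpoint control or pullback is required, because the limit is identified \emph{a posteriori} through uniqueness of the bounded entire solution.

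Your argument is instead a \emph{direct stability estimate}: you compare $\bar y_n$ against the already-known attractor trajectory $\bar z(\theta_t\omega)$ through a Gronwall inequality for $\|e_n\|^2$, and compensate for the absence of initial-data control by pulling the starting time $T_0$ back to $-\infty$ and using temperedness of $R_1(\theta_{T_0}\omega)$ and $R(\theta_{T_0}\omega)$. This avoids compactness and the subsequence bookkeeping entirely and is in principle quantitative, at the price of the error-term accounting you flag (the $\delta_n^{(j)}$ cross terms, the non-asymptotic portion of the integrating factor) and of having to verify that the absorbing-ball radii are tempered---a standard fact in the random-attractor framework that the paper's compactness route does not need. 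One minor sharpening: since $\sum_j\delta_n^{(j)}=0$, Jensen gives $\frac1N\sum_j\|\bar x_{\nu_n}^{(j)}-\bar z\|^2\ge\|e_n\|^2$ directly, so the dissipative contribution is actually $\le -2L\|e_n\|^2$ with no $\varepsilon_n^2$ correction.
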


\begin{proof}
Define
\[
\begin{split}
\bar{z}_\nu(\omega)=\frac{1}{N}\sum\limits_{j=1}^{N}\bar{x}_\nu^{(j)}(\omega),
\end{split}
\]
where
$\Big\{\Big(\bar{x}_\nu^{(1)}(\omega),\bar{x}_\nu^{(2)}(\omega),\dots,\bar{x}_\nu^{(N)}(\omega)\Big)\Big\}$
is the singleton sets random attractor of the random dynamical
system generated by RODEs
\eqref{linear-coupled-rode-again}-\eqref{initial-data}. Thus,
$\bar{z}_\nu(t,\omega)=\bar{z}_\nu(\theta_t\omega)$ satisfies
\begin{equation}\label{equality-1}
\begin{split}
\frac{d}{dt}\bar{z}_\nu(t,\omega)=\frac{1}{N}\sum\limits_{j=1}^{N}\Big(e^{-O_t^{(j)}(\omega)}f^{(j)}(e^{O_t^{(j)}(\omega)}\bar{x}_\nu^{(j)}(t,\omega))+O_t^{(j)}(\omega)\bar{x}_\nu^{(j)}(t,\omega)\Big).
\end{split}
\end{equation}

Note that
\[
\begin{split}
\|\frac{d}{dt}\bar{z}_\nu(t,\omega)\|^2\leq\frac{2}{N}\sum\limits_{j=1}^{N}\Big(e^{-2O_t^{(j)}(\omega)}\|f^{(j)}(e^{O_t^{(j)}(\omega)}\bar{x}_\nu^{(j)}(t,\omega))\|^2
+|O_t^{(j)}(\omega)|^2\|\bar{x}_\nu^{(j)}(t,\omega)\|^2\Big),
\end{split}
\]
by continuity and the fact that these solutions belong to the
compact ball $B_1(\omega)$, it follows that
$$\sup\limits_{t\in[T_1,T_2]}\|\frac{d}{dt}\bar{z}_\nu(t,\omega)\|
\leq\Bigg(\frac{2}{N}\sum\limits_{j=1}^{N}\frac{\beta}{4}M_{T_1,T_2}^{j,\bullet,\beta}
(\omega)\Bigg)^{\frac{1}{2}}<\infty.
$$
By Ascoli-Arzel\`{a} Theorem,  there exists a subsequence
$\nu_{n_k}\rightarrow\infty$ such that
$\bar{z}_{\nu_{n_k}}(t,\omega)$ converges to $\bar{z}(t,\omega)$ as
$n_k\rightarrow\infty$.

Since difference between any two components of a solution of the
coupled RODEs \eqref{linear-coupled-rode-again} tends to $0$
uniformly for $t\in[T_1,T_2]$ as $\nu\rightarrow\infty$, we have
\[
\begin{split}
\bar{x}_{\nu_{n_k}}^{(j)}(t,\omega)&=N\bar{z}_{\nu_{n_k}}(t,\omega)-\sum\limits_{j'\neq
j}\bar{x}_{\nu_{n_k}}^{(j')}(t,\omega)\\
&=\bar{z}_{\nu_{n_k}}(t,\omega)+\sum\limits_{j'\neq
j}\Big(\bar{z}_{\nu_{n_k}}(t,\omega)-\bar{x}_{\nu_{n_k}}^{(j')}(t,\omega)\Big)\\
&=\bar{z}_{\nu_{n_k}}(t,\omega)+\frac{1}{N}\sum\limits_{j'\neq
j}\sum\limits_{j''\neq
j'}\Big(\bar{x}_{\nu_{n_k}}^{(j'')}(t,\omega)-\bar{x}_{\nu_{n_k}}^{(j')}(t,\omega)\Big)\\
&\rightarrow\bar{z}(t,\omega)\\
\end{split}
\]
uniformly for $t\in[T_1,T_2]$ as $\nu_{n_k}\rightarrow\infty$ for
$j=1,\dots,N$.

Furthermore, it follows from \eqref{equality-1} that
\[
\begin{split}
\bar{z}_\nu(t,\omega)&=\bar{z}_\nu(T_1,\omega)+\frac{1}{N}\sum\limits_{j=1}^{N}\int_{T_1}^{t}e^{-O_s^{(j)}(\omega)}f^{(j)}(e^{O_s^{(j)}(\omega)}\bar{x}_\nu^{(j)}(s,\omega))ds\\
&\quad
+\frac{1}{N}\sum\limits_{j=1}^{N}\int_{T_1}^{t}O_s^{(j)}(\omega)\bar{x}_\nu^{(j)}(s,\omega)ds
\end{split}
\]
Thus,
\[
\begin{split}
\bar{z}(t,\omega)&=\bar{z}(T_1,\omega)+\frac{1}{N}\sum\limits_{j=1}^{N}\int_{T_1}^{t}e^{-O_s^{(j)}(\omega)}f^{(j)}(e^{O_s^{(j)}(\omega)}\bar{z}(s,\omega))ds\\
&\quad
+\frac{1}{N}\sum\limits_{j=1}^{N}\int_{T_1}^{t}O_s^{(j)}(\omega)\bar{z}(s,\omega)ds
\end{split}
\]
uniformly for $t\in[T_1,T_2]$ as $\nu_{n_k}\rightarrow\infty$, which
means that $\bar{z}(t,\omega)$ solves RODE
\eqref{average-rode-again}.

Note that any possible subsequences converge to the same limit, so
every sequence $\bar{z}_{\nu_n}(t,\omega)$ converges to
$\bar{z}(t,\omega)$ uniformly for $t\in[T_1,T_2]$ as
$\nu_n\rightarrow\infty$ by Lemma 2.2 in \cite{8}.

Finally, since the random dynamical system generated by the solution
of RODE \eqref{average-rode-again} has a singleton sets random
attractor $\{\bar{z}(\omega)\}$, the stationary stochastic process
$\bar{z}(\theta_t\omega)$ must be equal to $\bar{z}(t,\omega)$,
namely $\bar{z}(t,\omega)=\bar{z}(\theta_t\omega)$.
\end{proof}

As a straightforward consequence of Theorem \ref{main-result}, we
have

\begin{corollary}
$\Big(\bar x_\nu ^{(1)}(t,\omega ),\bar x_\nu ^{(2)}(t,\omega
),\dots ,\bar x_\nu ^{(N)}(t,\omega )\Big)^{\top}\rightarrow
\Big(\bar z(t,\omega ),\bar z(t,\omega ),\dots ,\bar z(t,\omega
)\Big)^{\top}$ pathwise uniformly for $t\in [T_1,T_2]$ as $\nu
\rightarrow \infty $.
\end{corollary}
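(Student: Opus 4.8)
The plan is to deduce the continuous limit $\nu\to\infty$ from the sequential limit already established in Theorem \ref{main-result}, using the elementary fact that in a metric space $F(\nu)\to L$ as $\nu\to\infty$ if and only if $F(\nu_n)\to L$ for every sequence $\nu_n\to\infty$. The only thing to check is that the objects in question live in a metric space and that the limit does not depend on the chosen sequence.

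First I would fix $\omega\in\overline{\Omega}$ and regard
\[
F(\nu):=\Big(\bar x_\nu^{(1)}(\cdot,\omega),\bar x_\nu^{(2)}(\cdot,\omega),\dots,\bar x_\nu^{(N)}(\cdot,\omega)\Big)^{\top}
\]
as an element of $C\big([T_1,T_2];\mathbb{R}^{Nd}\big)$ equipped with the supremum norm; the phrase ``pathwise uniformly for $t\in[T_1,T_2]$'' is precisely convergence in this norm. Theorem \ref{main-result} asserts that $F(\nu_n)\to\big(\bar z(\cdot,\omega),\dots,\bar z(\cdot,\omega)\big)^{\top}$ in $C\big([T_1,T_2];\mathbb{R}^{Nd}\big)$ for \emph{every} sequence $\nu_n\to\infty$, and the limit is the same for all such sequences, since it is determined by the unique singleton attractor $\{\bar z(\omega)\}$ of the averaged RODE \eqref{average-rode-again}, independently of the approximating sequence.

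Equivalently, and more concretely, I would argue by contradiction: if $F(\nu)$ did not converge to $\big(\bar z(\cdot,\omega),\dots,\bar z(\cdot,\omega)\big)^{\top}$ as $\nu\to\infty$, there would exist $\varepsilon>0$ and a sequence $\nu_n\to\infty$ with
\[
\sup_{t\in[T_1,T_2]}\Big\|\big(\bar x_{\nu_n}^{(1)}(t,\omega),\dots,\bar x_{\nu_n}^{(N)}(t,\omega)\big)^{\top}-\big(\bar z(t,\omega),\dots,\bar z(t,\omega)\big)^{\top}\Big\|\ge\varepsilon
\]
for all $n$, directly contradicting Theorem \ref{main-result} applied to this particular sequence. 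Hence no such $\varepsilon$ exists, which is the asserted uniform convergence as $\nu\to\infty$.

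There is essentially no obstacle here; the single point worth a word is that the limit $\bar z(\cdot,\omega)$ furnished by Theorem \ref{main-result} is independent of the chosen sequence $\nu_n$, which is exactly what licenses the sequential characterization of limits. This is the same bookkeeping already used at the end of the proof of Theorem \ref{main-result}, where Lemma 2.2 in \cite{8} was invoked to upgrade subsequential convergence of $\bar z_{\nu_n}(t,\omega)$ to full convergence.
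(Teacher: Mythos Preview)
Your argument is correct and matches the paper's intent: the paper gives no explicit proof of the corollary, simply labeling it ``a straightforward consequence of Theorem~\ref{main-result}'', and your sequential-characterization argument is exactly the one-line deduction the authors have in mind. The only content beyond the theorem is the observation that the limit $\bar z(\cdot,\omega)$ is the same for every sequence, which you note correctly.
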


In terms of the coupled SODEs \eqref{equivalent-sode}, its
stationary stochastic solution
\[
\begin{split}
\Big(\bar{x}^{(1)}_\nu(\theta_t\omega)e^{O_t^{(1)}(\omega)},\bar{x}%
^{(2)}_\nu(\theta_t\omega)e^{O_t^{(2)}(\omega)},\dots,\bar{x}%
^{(N)}_\nu(\theta_t\omega)e^{O_t^{(N)}(\omega)}\Big)^{\top}
\end{split}
\]
tends pathwisely to
\[
\begin{split}
\Big(\bar{z}(\theta_t\omega)e^{O_t^{(1)}(\omega)},\bar{z}(\theta_t%
\omega)e^{O_t^{(2)}(\omega)},\dots,\bar{z}(\theta_t\omega)e^{O_t^{(N)}(%
\omega)}\Big)^{\top}
\end{split}
\]
uniformly for $t\in[T_1,T_2]$ as $\nu\rightarrow\infty$. Obviously, if $%
c_{i}^{(1)}=c_{i}^{(2)}=\dots=c_{i}^{(N)}=c_{i}$ for $i=1,\dots,m$
in \eqref{equivalent-sode}, i.e. the driving noise is the same,
exact synchronization of solutions of the coupled SODEs
\[
\begin{split}
dX^{(j)}_t &=\bigg(f^{(j)}(X^{(j)}_t)+\nu\big(%
X^{(j-1)}_t-2X^{(j)}_t+X^{(j+1)}_t\big)\bigg)dt \\
&\quad +\sum\limits_{i=1}^mc_{i}X^{(j)}_t\circ dW_t^{(i)},\quad j=1,\dots,N
\end{split}
\]
occurs.

\begin{remark}
The results in this paper hold just in the almost everywhere sense
because $\omega\in\overline{\Omega}$ here (see Lemma
\ref{property-OU} and some interpretations below the lemma).
\end{remark}

\vskip 0.5cm

\noindent \textbf{Acknowledgement}

\vskip 0.2cm

\noindent Partially supported by the Leading Academic Discipline
Project of Shanghai Normal University (No. DZL707), the National
Natural Science Foundation of China under Grant No. 10771139, the
National Ministry of Education of China (200802700002), the
Foundation of Shanghai Talented Persons (No.049), the Innovation
Program of Shanghai Municipal Education Commission under Grant No.
08ZZ70, Foundation of Shanghai Normal University under Grant
DYL200803. The second author would like to express his sincere
thanks to Professor Wenxian Shen and Professor T. Caraballo for
their helpful discussion and kind help.


\end{document}